\date{}
\newlength{\defbaselineskip}
\newcommand{\setlinespacing}[1]%
           {\setlength{\baselineskip}{#1 \defbaselineskip}}
\newcommand{\disp}{\operatorname{disp}}
\newcommand{\N}{{\mathbb{N}}}
\newcommand{\actaqed}{\hfill $\actabox$}
{\medskip\noindent \textit{Proof of #1. }}%
{\actaqed \medskip}
\def\cB{\mathcal B}
\def \Tr{\mathcal T}
\def \K{\mathcal K}
\def \cF{\mathcal F}
\def\R{{\mathbb R}}
\def\Z{\mathbb Z}
\def \<{\langle}
\def\>{\rangle}
\def\ba{\mathbf a}
\def\bx{\mathbf x}
\def\by{\mathbf y}
\def\bz{\mathbf z}
\def\bk{\mathbf k}
\def\bu{\mathbf u}
\def\bm{\mathbf m}
\def\bs{\mathbf s}
\newtheorem{Theorem}{Theorem}[section]
\newtheorem{Lemma}{Lemma}[section]
\newtheorem{Definition}{Definition}[section]
\newtheorem{Proposition}{Proposition}[section]
\newtheorem{Corollary}{Corollary}[section]
\numberwithin{equation}{section}
\newcommand{\be}{\begin{equation}}
\newcommand{\ee}{\end{equation}}
\begin{document}

\title{On the fixed volume discrepancy of the Korobov point sets}
\author{A.S.  Rubtsova\thanks{Lomonosov Moscow State University, Russia.},\, K.S.  Ryutin\thanks{Lomonosov Moscow State University, Russia.}, \,and V.N. Temlyakov\thanks{University of South Carolina, USA; Steklov Institute of Mathematics and Lomonosov Moscow State University, Russia.} }
\maketitle
\begin{abstract}
This paper is devoted to the study of a discrepancy-type characteristic 
-- the fixed volume discrepancy -- 
of the Korobov point sets in the unit cube. 
It was observed recently that this new characteristic allows us to obtain 
optimal rate of dispersion from numerical integration results. 
This observation motivates us to thoroughly study this new version of 
discrepancy, which seems to be interesting by itself.  
This paper extends recent results by V. Temlyakov and M. Ullrich on the fixed volume discrepancy of the Fibonacci point sets. 
\end{abstract}

\section{Introduction} 
\label{I} 

This paper is a follow up to the recent paper \cite{VT172}. It is devoted to the study of a discrepancy-type characteristic -- the fixed volume discrepancy -- 
of a point set in the unit cube $\Omega_d:=[0,1)^d$. 
We refer the reader to the following books and survey papers on discrepancy theory and 
numerical integration \cite{BC}, \cite{Mat}, \cite{NW10}, \cite{VTbookMA}, \cite{Bi}, \cite{DTU}, \cite{T11}, and \cite{VT170}. 
 Recently, an important new observation was made in \cite{VT161}. 
It claims that a new version of discrepancy -- the $r$-smooth fixed volume discrepancy -- allows 
us to obtain optimal rate of \emph{dispersion} from numerical integration results 
(see \cite{AHR,BH19,DJ,RT,Rud,Sos,Ull,U19,UV18} for some recent results on dispersion). 
This observation motivates us to thoroughly study this new version of discrepancy, 
which seems to be interesting by itself. 

The \emph{$r$-smooth fixed volume discrepancy} takes into 
account two characteristics of a smooth hat function $h^r_B$ -- its smoothness $r$ and the volume of  its support $v:=\mathrm{vol}(B)$ (see the definition of $h^r_B$ below).  We now proceed to a formal description of the problem setting and to formulation of the results. 

Denote by $\chi_{[a,b)}(x)$  a univariate characteristic function (on $\R$) of the interval $[a,b)$ 
and, for $r=1,2,3,\dots$, we inductively define
$$
h^1_u(x):= \chi_{[-u/2,u/2)}(x)
$$
and 
$$
h^r_u(x) := h^{r-1}_u(x)\ast h^1_u(x),
$$
where
$$
f(x)\ast g(x) := \int_\R f(x-y)g(y)dy.
$$
Note that $h^2_u$ is the \emph{hat function}, i.e., 
$h_{u}^2(x) =\max\{u-|x|,0\}$.

Let $\Delta_tf(x):=f(x)-f(x+t)$ be the first difference. 
We say that a univariate function $f$ has smoothness $1$ in $L_1$ if 
$\|\Delta_t f\|_1\le C|t|$ for some absolute constant $C<\infty$. 
In case $\|\Delta^r_t f\|_1 \le C|t|^r$, where $\Delta^r_t:= (\Delta_t)^r$ is the $r$th difference operator, $r\in \N$, we say that $f$ has smoothness $r$ in $L_1$.
Then, $h^r_u(x)$ has smoothness $r$ in $L_1$ and has support $(-ru/2,ru/2)$. 

For a box $B$ of the form
\be\label{eq:B}
B= \prod_{j=1}^d [z_j-ru_j/2,z_j+ru_j/2)
\ee
 define
\be\label{eq:hB}
h^r_B(\bx):= h^r_\bu(\bx-\bz):=\prod_{j=1}^d h^r_{u_j}(x_j-z_j).
\ee
We begin with the \emph{non-periodic} $r$-smooth fixed volume discrepancy introduced and studied in \cite{VT161}.
\begin{Definition}\label{ED.1} Let $r\in\N$, $v\in (0,1]$ and $\xi:= \{\xi^\mu\}_{\mu=1}^m\subset [0,1)^d$ be a point set. 
We define the $r$-smooth fixed volume discrepancy with equal weights as
\be\label{E.1}
D^r(\xi,v):=  \sup_{B\subset \Omega_d:vol(B)=v}\left|\int_{\Omega_d} h_B^r(\bx)d\bx-\frac{1}{m}\sum_{\mu=1}^m h_B^r(\xi^\mu)\right|.
\ee
\end{Definition}
\noindent

It is well known that the Fibonacci cubature formulas are optimal in the sense 
of order for numerical integration of different kind of smoothness classes of 
functions of two variables, see e.g.~\cite{DTU,TBook,VTbookMA}. 
We present a result from \cite{VT161}, which shows that the Fibonacci point set 
has good fixed volume discrepancy.

Let $\{b_n\}_{n=0}^{\infty}$, $b_0=b_1 =1$, $b_n = b_{n-1}+b_{n-2}$,
$n\ge 2$, be the Fibonacci numbers. Denote the $n$th {\it Fibonacci point set} by
$$
\cF_n:= \Big\{\big(\mu/b_n,\{\mu b_{n-1} /b_n \}\big)\colon\, \mu=1,\dots,b_n\Big\}.
$$
In this definition $\{a\}$ is the fractional part of the number $a$.   The cardinality of the set $\cF_n$ is equal to $b_n$. In \cite{VT161} we proved 
the following upper bound.

\begin{Theorem}\label{ET.1} Let $r\ge2$. 
There exist constants $c,C>0$ such that for any $v\ge c/b_n$ 
we have 
\be\label{E.3} 
D^{r}(\cF_n,v)
\,\le\, C\,\frac{\log(b_n v)}{b_n^r}.
\ee
\end{Theorem}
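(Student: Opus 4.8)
The plan is to pass to a Fourier/cubature-error representation and then reduce everything to a lattice sum governed by the arithmetic of the golden ratio (here $d=2$). Since $B\subset\Omega_2$ and $h^r_B$ is supported in $B$, the periodization of $h^r_B$ agrees with $h^r_B$ on $\Omega_2$, so I may treat $h^r_B$ as a function on the torus. Using $h^r_u=h^1_u\ast\cdots\ast h^1_u$ and $\widehat{h^1_u}(\xi)=\sin(\pi\xi u)/(\pi\xi)$, its Fourier coefficients are $\widehat{h^r_B}(\bk)=e^{-2\pi i\bk\cdot\bz}\prod_{j=1}^2\big(\tfrac{\sin(\pi k_ju_j)}{\pi k_j}\big)^r$. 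Because $\bk\cdot\xi^\mu=\mu(k_1+k_2b_{n-1})/b_n\pmod 1$ for the Fibonacci points, the exponential sum $\tfrac1{b_n}\sum_{\mu}e^{2\pi i\bk\cdot\xi^\mu}$ equals $1$ when $k_1+b_{n-1}k_2\equiv0\pmod{b_n}$ and $0$ otherwise, so the integration error is exactly the lattice sum $\sum_{\bk\in\La_n\setminus\{0\}}\widehat{h^r_B}(\bk)$, where $\La_n:=\{\bk\in\Z^2:k_1+b_{n-1}k_2\equiv0\ (\mathrm{mod}\ b_n)\}$. Taking moduli and using $|\widehat{h^1_u}(\xi)|\le\min\{u,1/(\pi|\xi|)\}$, it suffices to bound, uniformly over box parameters with $u_1u_2=v/r^2$,
\be
S:=\sum_{\bk\in\La_n\setminus\{0\}}\prod_{j=1}^2\min\Big\{u_j,\tfrac1{\pi|k_j|}\Big\}^r,
\ee
where the $j$th factor is read as $u_j^r$ when $k_j=0$.

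Next I would dispose of the coordinate axes. If $k_1=0$ then $b_{n-1}k_2\equiv0$, and since $\gcd(b_{n-1},b_n)=1$ this forces $k_2\in b_n\Z\setminus\{0\}$, whence a geometric sum of size $O(b_n^{-r})$; the case $k_2=0$ is symmetric. The heart of the matter is the sum over $\bk$ with $k_1k_2\ne0$. Here I would invoke the key number-theoretic property of the Fibonacci lattice: since $b_{n-1}/b_n$ is a convergent of the badly approximable golden ratio, there is an absolute $c>0$ with $\max(|k_1|,1)\max(|k_2|,1)\ge c\,b_n$ for every $\bk\in\La_n\setminus\{0\}$, and moreover the number of points of $\La_n$ in a dyadic box $|k_1|\asymp2^{a_1},\,|k_2|\asymp2^{a_2}$ is $\lesssim 1+2^{a_1+a_2}/b_n$. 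Both facts rest on the continued-fraction/three-distance structure of $b_{n-1}/b_n$.

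Then I decompose $S$ dyadically and bound each block by (point count)$\times$(product of the $\min$-factors). The lower bound annihilates all blocks with $2^{a_1+a_2}<cb_n$, and on the survivors the count is $\lesssim 2^{a_1+a_2}/b_n$, giving
\be
S\ \lesssim\ \frac1{b_n}\sum_{\substack{a_1,a_2\ge0\\ 2^{a_1+a_2}\ge cb_n}}2^{a_1+a_2}\prod_{j=1}^2\min\{u_j^r,2^{-a_jr}\}.
\ee
Writing $K_j:=1/(\pi u_j)$ so that $K_1K_2\asymp1/v$, and splitting according to whether $2^{a_j}\lessgtr K_j$, the hypothesis $r\ge2$ makes each sum geometric and dominated by its smallest admissible index. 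Every region except the one where both $2^{a_j}\ge K_j$ contributes $O(b_n^{-r})$. In that remaining region each block contributes a quantity comparable to $b_n^{-r}$ that is essentially independent of the block, and these blocks lie on the critical diagonal $2^{a_1+a_2}\asymp b_n$ between the cutoff scales $2^{a_j}\asymp K_j$ and $2^{a_1+a_2}\asymp b_n$; their number is $\log_2\!\big(b_n/(K_1K_2)\big)\asymp\log_2(b_nv)$. Summing yields $S\lesssim\log(b_nv)/b_n^r$, and the $O(b_n^{-r})$ axis and off-diagonal contributions are absorbed once $c$ is chosen so that $b_nv$ exceeds a fixed constant greater than $1$.

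I would expect the main obstacle to be the two arithmetic inputs about $\La_n$ — the hyperbolic lower bound $\bar k_1\bar k_2\gtrsim b_n$ and the matching dyadic point count — since these are precisely where the special structure of the Fibonacci numbers enters; granting them, the rest is bookkeeping of geometric sums in which the only surviving logarithm is produced by counting the lattice points on the critical hyperbola $|k_1k_2|\asymp b_n$ whose coordinates both exceed the Fourier cutoffs $K_j$, a count equal to $\log_2(b_nv)$ up to an additive constant.
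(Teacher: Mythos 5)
Your proposal is correct and follows essentially the same route as the paper: your two ``granted'' arithmetic inputs are exactly the paper's Lemma~\ref{FL.1} (the hyperbolic lower bound $\bar k_1\bar k_2\gtrsim b_n$ for the Fibonacci lattice $L(n)$) and the dyadic count \eqref{eq:number}, and your dyadic bookkeeping with the $\min(u_j^r,2^{-a_jr})$ factors reproduces the paper's $H^r_B(\bs)$ bounds, Lemma~\ref{L2.2}, and the triangle-inequality summation in the proof of Theorem~\ref{IT.4} (specialized to $d=2$, where $(\log(Lv))^{d-1}=\log(b_nv)$), with the logarithm arising in both arguments from the $\asymp\log_2(b_nv)$ blocks on the critical hyperbola $2^{a_1+a_2}\asymp b_n$.
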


\bigskip
 
 The main object of  interest in the paper \cite{VT172} was the \emph{periodic} $r$-smooth $L_p$-discrepancy 
of the Fibonacci point sets. 
For this, we define the periodization $\tilde f$ 
(with period $1$ in each variable) 
of a function $f\in L_1(\R^d)$ with a compact support by
$$
\tilde f(\bx) := \sum_{\bm\in \Z^d} f(\bm+\bx)
$$
and, for each $B\subset [0,1)^d$, 
we let $\tilde h^r_B$ be the periodization of $h^r_B$ from~\eqref{eq:hB}.

We now define the periodic $r$-smooth $L_p$-discrepancy.
\begin{Definition}\label{ID.1}For $r\in\N$, $1\le p\le \infty$ and $v\in(0,1]$ define the 
periodic $r$-smooth fixed volume $L_p$-discrepancy of a point set $\xi$ by  
\be\label{1.4}
  \tilde D^{r}_p(\xi,v):=
	\sup_{B\subset \Omega_d:vol(B)=v}
	\left\|\int_{\Omega_d} \tilde h^r_B(\bx-\bz)d\bx- \frac1m\sum_{\mu=1}^m \tilde h^r_B(\xi^\mu-\bz)\right\|_p
\ee
where the $L_p$-norm is taken with respect to $\bz$ over the unit cube $\Omega_d=[0,1)^d$.\\
\end{Definition}

In the case of $p=\infty$ this concept was introduced and studied in \cite{VT163}. 

The following upper bound for $1\le p<\infty$ was proved in \cite{VT172}.
\begin{Theorem}\label{IT.1} 
Let $r\in\N$ and $1\le p<\infty$. 
There exist constants $c,C>0$ such that for any $v\ge c/b_n$ 
we have 
$$
\tilde D^{r}_p(\cF_n,v) \,\le\, C\,\frac{\sqrt{\log(b_n v)}}{b_n^r}.
$$
\end{Theorem}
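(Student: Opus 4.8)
The plan is to pass to Fourier analysis on the torus $\T^2$. Writing the cubature error functional $\La_n(f):=\int_{\Omega_2}f\,d\bx-\frac1{b_n}\sum_{\mu=1}^{b_n}f(\xi^\mu)$, expanding $\tilde h^r_B(\cdot-\bz)$ in its Fourier series, and invoking the classical aliasing property of the Fibonacci point set (namely $\La_n(e^{2\pi i\bk\cdot\bx})=0$ unless $\bk$ lies in the \emph{Fibonacci lattice} $L(n):=\{\bk\in\Z^2:\ k_1+b_{n-1}k_2\equiv0\ (\mathrm{mod}\ b_n)\}$), the quantity inside the $L_p$-norm in \eqref{1.4} becomes, as a function of $\bz$,
\be
E_B(\bz)=-\sum_{\bk\in L(n)\setminus\{0\}}\widehat{h^r_B}(\bk)\,e^{-2\pi i\bk\cdot\bz},\qquad \bigl|\widehat{h^r_B}(\bk)\bigr|=\prod_{j=1}^2\Bigl|\tfrac{\sin(\pi k_j u_j)}{\pi k_j}\Bigr|^r,
\ee
since $h^r_u=h^1_u\ast\dots\ast h^1_u$ has Fourier transform $(\sin(\pi k u)/\pi k)^r$. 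As $vol(B)=r^2u_1u_2=v$, the task reduces to a uniform-in-$B$ estimate of $\|E_B\|_p$ governed by the weights $\prod_j\min(u_j,1/|k_j|)^r$ and the geometry of $L(n)$.

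First I would settle $p=2$, where Parseval gives $\|E_B\|_2^2=\sum_{\bk\in L(n)\setminus\{0\}}|\widehat{h^r_B}(\bk)|^2$. Decompose $\Z^2$ into dyadic cells $\{|k_1|\sim2^a,\ |k_2|\sim2^b\}$ and use the well-known hyperbolic lower bound for nonzero points of the Fibonacci lattice, $\prod_j\max(|k_j|,1)\ge c\,b_n$ (equivalently: a centred box of area at most $c\,b_n$ meets $L(n)$ only at the origin). Choosing the constant in the hypothesis $v\ge c/b_n$ accordingly forces the ``both-small'' cell $\{|k_j|<1/u_j\}$, of area $1/v$, to contain no nonzero lattice point, so the sum runs only over cells on or above the minimal hyperbolic layer $|k_1k_2|\sim b_n$. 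On that layer each admissible cell carries $O(1)$ lattice points, each weight is $\sim b_n^{-r}$, and the number of cells compatible with $|k_j|\gtrsim1/u_j$ is exactly $\sim\log_2(b_nu_1u_2)=\log(b_nv)$; the higher layers contribute a geometrically decaying remainder. This yields $\|E_B\|_2^2\lesssim\log(b_nv)\,b_n^{-2r}$, hence the claimed bound for $p=2$ and, via $\|E_B\|_p\le\|E_B\|_2$, for every $1\le p\le2$.

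For $2<p<\infty$ the square-root must be preserved, which rules out crude interpolation between the $p=2$ estimate and the $L_\infty$ bound of size $\log(b_nv)\,b_n^{-r}$: that route only delivers the exponent $(\log(b_nv))^{1-1/p}$. Instead I would run a Littlewood--Paley argument adapted to the cell decomposition. Writing $E_{B,(a,b)}$ for the piece of $E_B$ with frequencies in the dyadic rectangle $\{|k_1|\sim2^a,\ |k_2|\sim2^b\}$, the product (tensor) dyadic Littlewood--Paley inequality on $\T^2$, valid for $1<p<\infty$, gives
\be
\|E_B\|_p\ \lesssim_p\ \Bigl\|\bigl(\textstyle\sum_{a,b}|E_{B,(a,b)}|^2\bigr)^{1/2}\Bigr\|_p\ \le\ \Bigl(\sum_{a,b}\|E_{B,(a,b)}\|_p^2\Bigr)^{1/2},
\ee
the last step being the triangle inequality in $L_{p/2}$ ($p/2\ge1$). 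Because each admissible cell contains only $O(1)$ Fibonacci frequencies, Nikol'skii's inequality for trigonometric polynomials with a bounded number of harmonics gives $\|E_{B,(a,b)}\|_p\lesssim_p\|E_{B,(a,b)}\|_2$ cell by cell; summing and invoking Parseval returns precisely the $L_2$ quantity already estimated, so $\|E_B\|_p\lesssim_p\sqrt{\log(b_nv)}\,b_n^{-r}$ uniformly in $B$. Taking the supremum over admissible boxes finishes the proof.

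The main obstacle is the lattice count of the second paragraph: everything hinges on the hyperbolic minimal-vector bound for $L(n)$ and on showing that exactly $\sim\log(b_nv)$ dyadic cells are ``activated'' by the volume constraint, with $O(1)$ points apiece on the critical layer and a summable tail above it. This is where the arithmetic of the Fibonacci numbers (the continued-fraction structure of $b_{n-1}/b_n$) enters, and where the genuine dependence on $v$—rather than a blanket $\log b_n$—is produced. The second delicate point is justifying the product Littlewood--Paley inequality together with the per-cell Nikol'skii step for $p>2$; once these are in place, the passage from the $L_2$ estimate to general finite $p$ costs only a factor depending on $p$.
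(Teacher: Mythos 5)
Your overall strategy coincides with the paper's: expand the error in a Fourier series, use aliasing to restrict the sum to the Fibonacci lattice $L(n)\setminus\{0\}$, split into dyadic cells, handle $p=2$ by Parseval, and pass to $2<p<\infty$ via the dyadic Littlewood--Paley inequality plus a per-cell estimate. (In this paper the result is obtained as the $d=2$, $L\sim b_n$ case of the Korobov statement, Theorem \ref{IT.3}, via Lemma \ref{FL.1}; the proof in Section \ref{Fib} is exactly of this form.) Your $p\le 2$ argument, including the role of the hyperbolic minimal-vector bound and the count of $\sim\log(b_nv)$ activated cells, is sound.

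The $p>2$ step, however, contains a genuine flaw. You assert that ``each admissible cell contains only $O(1)$ Fibonacci frequencies'' and deduce a uniform Nikol'skii bound $\|E_{B,(a,b)}\|_p\lesssim_p\|E_{B,(a,b)}\|_2$ for \emph{every} cell, hence $\sum_{a,b}\|E_{B,(a,b)}\|_p^2\lesssim\|E_B\|_2^2$. That count is correct only on the critical layer $|k_1k_2|\sim b_n$: a cell with $|k_1|\sim2^a$, $|k_2|\sim2^b$, $a+b=t\ge t_0$ (where $2^{t_0}\sim b_n$) can contain of order $2^{t-t_0}$ lattice points --- a growth you yourself acknowledge in the $p=2$ discussion (``summable tail above it''), which is also the content of the paper's bound \eqref{eq:number}. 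For such a cell the Nikol'skii constant is $N^{1/2-1/p}\sim 2^{(t-t_0)(1/2-1/p)}$ with $N$ the number of frequencies, not $O(1)$, so your reduction of $\|E_B\|_p$ to the already-computed $L_2$ quantity is not justified as written. The conclusion survives because this polynomial growth is dominated by the geometric decay of the weights: keeping the count, each cell on layer $t$ obeys $\|E_{B,(a,b)}\|_p\lesssim N^{1-1/p}H_B^r(\bs)$ with $N\lesssim 2^{t-t_0}$, the sum over layer $t$ then carries a factor $2^{2(t-t_0)(1-1/p)}$ against the layer bound \eqref{eq:HB-bound} of size $2^{-2rt}\log(2^tv)$, and the resulting series converges precisely because $r\ge1>1-1/p$. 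This is exactly how the paper closes the argument (per-cell interpolation $\|f\|_p\le\|f\|_2^{2/p}\|f\|_\infty^{1-2/p}$ together with \eqref{eq:number} and \eqref{eq:HB-bound}); your Nikol'skii step should be repaired in the same way, after which your proof becomes the paper's.
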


\medskip

In  the case $p=\infty$  a weaker upper bound was proved in \cite{VT172}.

\begin{Theorem}\label{IT.2} Let $r\ge 2$. There exist constants $c,C>0$ such that for any $v\ge c/b_n$ 
we have 
$$
\tilde D^{r}_\infty(\cF_n,v) \,\le\, C\,\frac{\log(b_n v)}{b_n^r}.
$$
\end{Theorem}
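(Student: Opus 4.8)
The plan is to pass to the Fourier side and exploit the lattice structure of the Fibonacci points (here $d=2$). Fix a box $B$ of volume $v$; by translation invariance we may center it at the origin, so that $\tilde h^r_B(\bx-\bz)=\prod_{j=1}^2 \tilde h^r_{u_j}(x_j-z_j)$ with $\prod_{j=1}^2 ru_j=v$, i.e. $u_1u_2=v/r^2$. Expanding the $1$-periodic function $\tilde h^r_{u_j}$ in a Fourier series and using $\widehat{h^1_{u_j}}(2\pi k)=\sin(\pi k u_j)/(\pi k)$ together with $h^r_{u_j}=(h^1_{u_j})^{*r}$, the Fourier coefficients of $\tilde h^r_B$ are $c_\bk=\prod_{j=1}^2\bigl(\sin(\pi k_j u_j)/(\pi k_j)\bigr)^r$. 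Since $\frac1m\sum_{\mu}e^{2\pi i\bk\cdot\xi^\mu}$ equals $1$ when $k_1+b_{n-1}k_2\equiv0\ (\mathrm{mod}\ b_n)$ and $0$ otherwise, the integration error as a function of $\bz$ is $R(\bz)=-\sum_{\bk\in L(b_n)\setminus\{0\}}c_\bk\,e^{-2\pi i\bk\cdot\bz}$, where $L(b_n)=\{\bk\in\Z^2:k_1+b_{n-1}k_2\equiv0\ (\mathrm{mod}\ b_n)\}$ is the dual lattice. Taking the supremum over $\bz$ and the triangle inequality then give $\|R\|_\infty\le\sum_{\bk\in L(b_n)\setminus\{0\}}|c_\bk|$, so the whole problem reduces to this one lattice sum, where each factor obeys $|\sin(\pi k_j u_j)/(\pi k_j)|\le\min\{u_j,1/(\pi|k_j|)\}$.

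Next I would bring in two arithmetic facts about the Fibonacci lattice. First, the badly-approximable nature of the convergents $b_{n-1}/b_n$ of the golden section yields the hyperbolic bound $\prod_{j=1}^2\max\{1,|k_j|\}\ge c\,b_n$ for every $\bk\in L(b_n)\setminus\{0\}$; in particular a nonzero off-axis point has $|k_1k_2|\ge c\,b_n$, while an on-axis point has a coordinate divisible by $b_n$. Second, since $L(b_n)$ has index $b_n$ in $\Z^2$, the number of its points in a region equals, up to boundary effects, its area divided by $b_n$. I set the thresholds $K_j:=1/(\pi u_j)$, so that $\min\{u_j,1/(\pi|k_j|)\}$ equals $u_j$ for $|k_j|\le K_j$ and $1/(\pi|k_j|)$ otherwise, and I note $K_1K_2=1/(\pi^2u_1u_2)\asymp 1/v$.

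I would then split $L(b_n)\setminus\{0\}$ according to the position of $\bk$ relative to the thresholds $K_j$. The region $\{|k_1|\le K_1,\ |k_2|\le K_2\}$ contains no off-axis point, since such a point would satisfy $c\,b_n\le|k_1k_2|\le K_1K_2\asymp 1/v$, i.e. $b_nv\lesssim1$, contradicting $b_nv\ge c$ for $c$ large. The on-axis points ($k_2=0$, so $k_1=b_n\ell$) contribute $\sum_{\ell\ne0}u_2^r(\pi b_n|\ell|)^{-r}\lesssim u_2^r b_n^{-r}\lesssim b_n^{-r}$, and symmetrically for $k_1=0$. In a mixed region, say $|k_1|\le K_1<|k_2|$ with $k_1\ne0$, dyadic boxes $|k_1|\asymp A\le K_1$, $|k_2|\asymp B$ carry $\lesssim AB/b_n$ points and, by the hyperbolic bound, force $B\gtrsim b_n/A\ (>K_2)$; summing $(AB/b_n)\,u_1^r(\pi B)^{-r}$ first in $B$ and then in $A$ gives $\lesssim u_1^rK_1^rb_n^{-r}\asymp b_n^{-r}$, with no logarithmic loss. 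The main term is the region $\{|k_1|>K_1,\ |k_2|>K_2\}$, where $|c_\bk|\le(\pi^2|k_1k_2|)^{-r}$. Decomposing dyadically by $P=|k_1k_2|\asymp 2^s b_n$ $(s\ge0)$, the constraints $|k_j|>K_j$ confine the admissible aspect ratios to $\asymp\log(Pv)$ dyadic values, so the shell carries $\lesssim (P/b_n)\log(Pv)$ lattice points and contributes $\lesssim b_n^{-r}(s+\log(b_nv))2^{s(1-r)}$. Hence
\[
\sum_{\bk\in L(b_n)\setminus\{0\}}|c_\bk|\ \lesssim\ b_n^{-r}+b_n^{-r}\sum_{s\ge0}\bigl(s+\log(b_nv)\bigr)2^{s(1-r)}\ \lesssim\ \frac{\log(b_nv)}{b_n^r},
\]
where the geometric series converges precisely because $r\ge2$.

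The delicate point, and the reason the bound reads $\log(b_nv)$ rather than the weaker $\log b_n$, is exactly this aspect-ratio count: one must retain the truncation at $u_j$, since crudely summing $|k_1k_2|^{-r}$ over all off-axis points of $L(b_n)$ with $|k_1k_2|\gtrsim b_n$ restores the full range of $\asymp\log P$ aspect ratios and produces $\log b_n$. Controlling the number of lattice points in thin hyperbolic shells with the sharp factor $\log(Pv)$ is therefore the main obstacle, and it is where the continued-fraction structure of the Fibonacci numbers must be used carefully. Finally, this approach explains the gap with Theorem~\ref{IT.1}: in $L_\infty$ we are forced to sum the moduli $|c_\bk|$ with no cancellation, which both yields a full $\log(b_nv)$ instead of its square root and requires $r\ge2$ for the dyadic series to converge, whereas the $L_p$, $p<\infty$, estimate exploits orthogonality (Parseval) to save the square root and to admit $r=1$.
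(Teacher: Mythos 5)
Your argument is correct in substance, and its core is the same Fourier-analytic mechanism the paper uses; note, though, that the paper never proves Theorem \ref{IT.2} directly (it is quoted from \cite{VT172}) — the in-paper argument is the proof of Theorems \ref{IT.3} and \ref{IT.4}, which yields Theorem \ref{IT.2} upon specializing to $d=2$, $m=b_n$, $\ba=(1,b_{n-1})$ via Lemma \ref{FL.1}. Both you and the paper reduce, by the triangle inequality, to bounding $\sum_{\bk\in L\setminus\{\mathbf 0\}}|\hat{\tilde h}^r_B(\bk)|$ (this is exactly what costs the full logarithm and forces $r\ge 2$, as you say at the end), bound the coefficients by $\prod_j\min\bigl(u_j,1/(\pi|k_j|)\bigr)^r$, and extract the factor $\log(b_nv)$ from a count of the dyadic aspect ratios that actually contribute, using the fact that $L(b_n)\setminus\{\mathbf 0\}$ avoids the hyperbolic cross $\Gamma(\gamma b_n,2)$. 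The difference is organizational: the paper groups frequencies into blocks $\rho(\bs)$ with $\|\bs\|_1=t$, uses the per-block count \eqref{eq:number} together with the ready-made Lemma \ref{L2.2} for $\sum_{\|\bs\|_1=t}H_B^r(\bs)$, and sums a geometric series in $t\ge t_0$; you instead split $\Z^2$ by the thresholds $K_j=1/(\pi u_j)$ (on-axis, empty, mixed, main regions) and slice the main region into hyperbolic shells $|k_1k_2|\asymp 2^s b_n$, proving the aspect-ratio count by hand. The two are mathematically equivalent — your shell count is precisely the $d=2$ content of Lemma \ref{L2.2} — but yours is self-contained for the Fibonacci case, while the paper's block version is dimension-free and conditional only on exactness on $\Tr(L,d)$, which is what makes the Korobov generalization immediate.

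One justification needs tightening: your second ``arithmetic fact,'' that a region contains (area)$/b_n$ points of $L(b_n)$ ``up to boundary effects,'' is not a valid tool as stated. It fails for general index-$b_n$ sublattices (e.g.\ the Korobov lattice with $\ba=(1,-1)$, i.e.\ $k_1\equiv k_2 \pmod{b_n}$, has $\asymp A$ points in an $A\times A$ box even when $A^2\ll b_n$), and even for $L(b_n)$ a perimeter-type error term — governed by the shortest vector, which is only of order $\sqrt{b_n}$ — would swamp $AB/b_n$ in your elongated boxes with $A\asymp 1$, $B\asymp 2^sb_n$. The correct statement, that a box with $|k_1|\asymp A$, $|k_2|\asymp B$ contains at most $C(1+AB/b_n)$ points of $L(b_n)$, follows from your first fact alone: partition the box into $O(1+AB/(\gamma b_n))$ sub-boxes whose side lengths have product at most $\gamma b_n/4$; two lattice points in the same sub-box would differ by a nonzero element of $L(b_n)$ lying in $\Gamma(\gamma b_n,2)$, contradicting Lemma \ref{FL.1}. (This anisotropic sub-box argument is exactly how the paper's count \eqref{eq:number} is obtained.) With that repair, your region-by-region estimates — including the decisive retention of the truncation at $u_j$, which is what yields $\log(b_nv)$ rather than $\log b_n$ — go through as written.
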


 Some comments, which show that Theorems \ref{IT.1} and \ref{IT.2} cannot be improved in a certain sense were given in \cite{VT172}.  
 
 Our main interest in this paper is to study the Korobov cubature formulas instead of the 
 Fibonacci cubature formulas from the point of view of the fixed volume discrepancy. We prove a conditional result under assumption that the Korobov cubature formulas are exact on a certain subspace of trigonometric polynomials with frequencies  from a hyperbolic cross. 
 There are results that guarantee existence of such cubature formulas (see Section \ref{F} for a discussion). 
 
 Let $m\in\N$, $\mathbf a := (a_1,\dots,a_d)$, $a_1,\dots,a_d\in\Z$.
We consider the cubature formulas
$$
P_m (f,\mathbf a):= m^{-1}\sum_{\mu=1}^{m}f\left (\left \{\frac{\mu a_1}
{m}\right\},\dots,\left \{\frac{\mu a_d}{m}\right\}\right),
$$
which are called the {\it Korobov cubature formulas}.  In the case $d=2$, $m=b_n$, $\mathbf a = (1,b_{n-1})$ we have
$$
P_m (f,\mathbf a) =\Phi_n (f):= \frac{1}{b_n}\sum_{\by\in \cF_n} f(\by).
$$

Denote 
$$
\mathbf y^{\mu}:=\left (\left \{\frac{\mu a_1}
{m}\right\},\dots,\left \{\frac{\mu a_d}{m}\right\}\right), \quad \mu = 1,\dots,m,\quad \K_m(\ba):=\{\by^\mu\}_{\mu=1}^m.
$$
The set $\K_m(\ba)$ is called the {\it Korobov point set}. Further, denote
$$
S(\mathbf k, \ba) \,:=\, P_m\left(e^{i2\pi(\mathbf k,\mathbf x)},\ba\right)
\,=\, m^{-1}\sum_{\mu=1}^{m}e^{i2\pi(\mathbf k,\mathbf y^{\mu})}.
$$
Note that
\be\label{2.1}
P_m (f,\ba) =\sum_{\mathbf k}\hat f(\mathbf k)\, S(\mathbf k,\ba),\quad 
\hat f(\bk) := \int_{[0,1)^d} f(\bx)\, e^{-i2\pi(\bk,\bx)}d\bx,
\ee
where for the sake of simplicity we may assume that $f$ is a
trigonometric polynomial. It is clear that (\ref{2.1}) holds for $f$ with absolutely convergent Fourier series. 

It is easy to see that the following relation holds
\be\label{2.2}
S(\mathbf k,\ba)=
\begin{cases}
1&\quad\text{ for }\quad \mathbf k\in L(m,\ba),\\
0&\quad\text{ for }\quad \mathbf k\notin L(m,\ba),
\end{cases}
\ee
where
$$
L(m,\mathbf a) := \bigl\{ \mathbf k:(\mathbf a,\mathbf k) \equiv 0 \qquad
\pmod{m}\bigr\} .
$$
  For $N\in\N$ define the {\it hyperbolic cross}   by
$$
\Gamma(N,d):= \left\{\bk= (k_1,\dots,k_d)\in\Z^d\colon \prod_{j=1}^d \max(|k_j|,1) \le N\right\}.
$$
Denote 
$$
\Tr(N,d) := \left\{f\, :\, f(\bx)= \sum_{\bk\in \Gamma(N,d)} c_\bk e^{i2\pi (\bk,\bx)}\right\}.
$$
It is easy to see that the condition 
\be\label{ex}
P_m(f,\ba) = \hat f(\mathbf 0), \quad f\in \Tr(N,d),
\ee
is equivalent to the condition
\be\label{exs}
\Gamma(N,d)\cap \bigl(L(m,\ba)\backslash\{\mathbf 0\}\bigr) =
\varnothing.
\ee
\begin{Definition}\label{exact} We say that the Korobov cubature formula $P_m(\cdot,\ba)$ is exact on $\Tr(N,d)$ if condition \eqref{ex} (equivalently, condition \eqref{exs}) is satisfied.
\end{Definition} 

\begin{Theorem}\label{IT.3} Suppose that $P_m(\cdot,\ba)$ is exact on $\Tr(L,d)$ with some $L\in\N$, $L\ge 2$. 
Let $r\in\N$ and $1\le p<\infty$. 
There exist constants $c(d),C(d,p)>0$ such that for any $v\ge c(d)/L$ 
we have 
$$
\tilde D^{r}_p(\K_m(\ba),v) \,\le\, C(d,p)L^{-r} (\log (Lv))^{(d-1)/2}.
$$
\end{Theorem}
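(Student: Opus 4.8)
The plan is to pass to the Fourier side and reduce the whole problem to estimating, for each admissible box $B$, a single trigonometric polynomial in $\bz$. Fix $B$ as in \eqref{eq:B} with center $\mathbf c$ and half-widths $ru_j/2$, so that $\prod_{j=1}^d u_j=v/r^d=:V$. Periodization preserves Fourier coefficients, so for $\bk\in\Z^d$ one has $\widehat{\tilde h^r_B}(\bk)=\hat h^r_B(\bk)=e^{-i2\pi(\bk,\mathbf c)}\prod_{j=1}^d\left(\frac{\sin(\pi u_jk_j)}{\pi k_j}\right)^r$, where on the right $\hat h^r_B$ is the Fourier transform on $\R^d$. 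For fixed $\bz$ the integral term in \eqref{1.4} equals $\hat h^r_B(\mathbf 0)$, while \eqref{2.1}--\eqref{2.2} identify the cubature sum with $\sum_{\bk\in L(m,\ba)}e^{-i2\pi(\bk,\bz)}\hat h^r_B(\bk)$; hence the expression under the $L_p$-norm in \eqref{1.4} is
\[
E_B(\bz):=-\sum_{\bk\in L(m,\ba)\setminus\{\mathbf 0\}}\hat h^r_B(\bk)\,e^{-i2\pi(\bk,\bz)},
\]
a trigonometric polynomial with spectrum in $L(m,\ba)\setminus\{\mathbf 0\}$. By Definition \ref{exact} and \eqref{exs} this spectrum avoids $\Gamma(L,d)$; and since $(m,0,\dots,0)\in L(m,\ba)\setminus\{\mathbf 0\}$, exactness forces $m>L$, which I use below when counting lattice points. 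It remains to bound $\sup_B\|E_B\|_p$.

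I would first dispose of $1\le p\le2$, where $\|E_B\|_p\le\|E_B\|_2$, and treat the $L_2$ estimate as the analytic core. By Parseval,
\[
\|E_B\|_2^2=\sum_{\bk\in L(m,\ba)\setminus\{\mathbf 0\}}|\hat h^r_B(\bk)|^2,\qquad |\hat h^r_B(\bk)|\le\prod_{j=1}^d\min\bigl(u_j,(\pi\max(|k_j|,1))^{-1}\bigr)^r.
\]
I would split this over the hyperbolic layers $2^{l-1}<\prod_{j=1}^d\max(|k_j|,1)\le2^l$, $2^l>L$: on each the summand is at most $\min(V,\pi^{-d}2^{1-l})^{2r}$, and the number of contributing lattice points is read off from the integer-point count $\asymp2^l l^{d-1}$ of $\Gamma(2^l,d)$ together with $m>L$. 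Since $v\ge c(d)/L$ gives $1/V\lesssim L$, the zone in which every coordinate stays below its cutoff $1/(\pi u_j)$ only reaches $\prod_j\max(|k_j|,1)\lesssim L$, so the factor $2^{-2rl}$ makes the layer $l\asymp\log_2 L$ dominant. The delicate point is uniformity over box shapes: the per-coordinate cutoffs confine the dominant layer to a sub-box of the hyperbolic shell whose lattice-point count is $\lesssim(\log(Lv))^{d-1}$ rather than $(\log L)^{d-1}$, and it is exactly this that turns $\log L$ into $\log(Lv)$. Carrying this out gives $\|E_B\|_2\le C(d)\,L^{-r}(\log(Lv))^{(d-1)/2}$.

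For $p>2$ I would decompose $E_B=\sum_{\bs}E_{B,\bs}$ over the dyadic blocks $\rho(\bs)=\{\bk:2^{s_j-1}\le|k_j|<2^{s_j},\ j=1,\dots,d\}$ (with the usual convention when $s_j=0$) and invoke the $d$-parameter Littlewood--Paley inequality ($1<p<\infty$) to get $\|E_B\|_p\le C(d,p)\bigl\|(\sum_{\bs}|E_{B,\bs}|^2)^{1/2}\bigr\|_p$; for $p\ge2$ the triangle inequality in $L_{p/2}$ reduces this to $\bigl(\sum_{\bs}\|E_{B,\bs}\|_p^2\bigr)^{1/2}$, with $\sum_{\bs}\|E_{B,\bs}\|_2^2=\|E_B\|_2^2$ by Parseval. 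The heart of the matter, and what I expect to be the main obstacle, is a sharp per-block $L_p$ estimate. The naive bounds $\|E_{B,\bs}\|_p\le|\rho(\bs)\cap L(m,\ba)|^{1/2}\|E_{B,\bs}\|_2$ and the Nikolskii inequality in the block degrees $2^{s_j}$ are both too lossy on blocks with $2^{s_1+\cdots+s_d}\gg m$ (which carry $\asymp 2^{s_1+\cdots+s_d}/m$ lattice points): they reinsert a power of $m$ and already spoil the bound for $r=1$ and large $p$. One instead needs an estimate exploiting the sparsity and structure of $L(m,\ba)\cap\rho(\bs)$ --- morally that $E_{B,\bs}$ behaves like a Dirichlet-type kernel on $N:=|\rho(\bs)\cap L(m,\ba)|$ frequencies, of the form $\|E_{B,\bs}\|_p\lesssim N^{1-1/p}\max_{\bk\in\rho(\bs)}|\hat h^r_B(\bk)|$, together with the same per-coordinate cutoff bookkeeping as in the $L_2$ core so that the block sum produces $(\log(Lv))^{d-1}$ and not $(\log L)^{d-1}$. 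Granting this, the blocks recombine to $\|E_B\|_p\le C(d,p)L^{-r}(\log(Lv))^{(d-1)/2}$ uniformly in $B$. Establishing the sharp per-block inequality with only the exactness hypothesis on $\ba$ (and no explicit continued-fraction structure, unlike the Fibonacci case) is the principal difficulty; the alternative route through the even moments $\|E_B\|_{2k}$ and the additive energy of $L(m,\ba)$ on the support of $\hat h^r_B$ meets the same wall, since Young's convolution inequality alone is insufficient for small $r$ and the lattice structure must be used.
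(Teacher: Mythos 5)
Your overall architecture (expand $E_B$ over $L(m,\ba)\setminus\{\mathbf 0\}$, kill the blocks inside $\Gamma(L,d)$ by exactness, decompose dyadically, use Littlewood--Paley for $p>2$, reduce $p<2$ to $p=2$) is the same as the paper's, but the proposal has two genuine gaps, and the one you flag as ``the principal difficulty'' is in fact the step the paper dispatches in two lines. You correctly write down the needed per-block inequality $\|E_{B,\bs}\|_p\lesssim N^{1-1/p}\max_{\bk\in\rho(\bs)}|\hat{\tilde h}^r_B(\bk)|$ with $N:=\#(\rho(\bs)\cap L(m,\ba))$, but then claim it cannot be obtained from naive bounds and requires extra lattice structure beyond exactness. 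It can: Parseval gives $\|E_{B,\bs}\|_2\le\sqrt{N}\,H^r_B(\bs)$, the triangle inequality gives $\|E_{B,\bs}\|_\infty\le N\,H^r_B(\bs)$, and the elementary interpolation $\|f\|_p\le\|f\|_2^{2/p}\|f\|_\infty^{1-2/p}$ yields exactly $N^{1-1/p}H^r_B(\bs)$. Your worry that this ``spoils the bound for $r=1$ and large $p$'' is mistaken: with the count $N\le C_2 2^{t-t_0}$ of \eqref{eq:number}, the resulting series is $\sum_{t\ge t_0}2^{2(t-t_0)(1-1/p)}2^{-2rt}\big(\log(2^tv)\big)^{d-1}$, which converges and is dominated by its first term precisely because $r\ge 1>1-1/p$ for every finite $p$ (for $r=1$, large $p$, the summand still decays like $2^{-2t/p}$). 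So no power of $m$ ever enters, and no even-moment/additive-energy argument is needed.

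The second gap is the lattice-point count itself. You propose to ``read off'' the number of points of $L(m,\ba)$ in a hyperbolic layer from the integer-point count $\asymp 2^l l^{d-1}$ of $\Gamma(2^l,d)$ ``together with $m>L$.'' That is a density heuristic, and it is false for general index-$m$ sublattices of $\Z^d$: a lattice containing a short vector (e.g.\ $L(m,\ba)$ with $\ba=(1,0,\dots,0)$) packs $\asymp 2^{s_2+\dots+s_d}$ points into a single thin block, far above volume$/m$. The correct source of the count is not $m>L$ but the avoidance condition \eqref{exs}: two distinct points of $L(m,\ba)$ in the same block $\rho(\bs)$ differ by a nonzero vector of $L(m,\ba)$, which must lie outside $\Gamma(L,d)$; partitioning $\rho(\bs)$ into $\lesssim 2^{t-t_0}$ sub-boxes whose internal differences stay in $\Gamma(L,d)$ then gives at most one lattice point per sub-box, i.e.\ \eqref{eq:number}. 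Finally, the refinement from $(\log L)^{d-1}$ to $(\log(Lv))^{d-1}$, which you correctly identify as essential, is only gestured at (``per-coordinate cutoff bookkeeping''); in the paper this is exactly the content of Lemma \ref{L2.2} (quoted from earlier work) leading to \eqref{eq:HB-bound}, and your $L_2$ core is not complete without an argument of this type.
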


\medskip

In  the case $p=\infty$  we prove a weaker upper bound.

\begin{Theorem}\label{IT.4} Let $r\ge 2$. Suppose that $P_m(\cdot,\ba)$ is exact on $\Tr(L,d)$ with some $L\in\N$, $L\ge 2$. 
There exist constants $c(d),C(d)>0$ such that for any $v\ge c(d)/L$ 
we have 
$$
\tilde D^{r}_\infty(\K_m(\ba),v) \,\le\, C(d)L^{-r}(\log (Lv))^{d-1}.
$$
\end{Theorem}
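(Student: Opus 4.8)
The plan is to pass to the Fourier side, turn the $L_\infty$ discrepancy into an absolutely convergent sum over the Korobov lattice $L(m,\ba)$, and estimate that sum by a dyadic (hyperbolic) decomposition whose only arithmetic input is the exactness hypothesis. By Poisson summation $\widehat{\tilde h^r_B}(\bk)=\hat h^r_B(\bk)=e^{-2\pi i(\bk,\bz_B)}\prod_{j=1}^d\big(\sin(\pi u_jk_j)/(\pi k_j)\big)^r$, where $\bz_B$ is the centre of $B$ and the $j$th factor is read as $u_j$ at $k_j=0$. Expanding $\tilde h^r_B(\xi^\mu-\bz)$ in its Fourier series and using $\frac1m\sum_\mu e^{2\pi i(\bk,\xi^\mu)}=S(\bk,\ba)$ together with \eqref{2.2}, the integral cancels the $\bk=\mathbf 0$ mode and
\be
\int_{\Omega_d}\tilde h^r_B(\bx-\bz)\,d\bx-\frac1m\sum_{\mu=1}^m\tilde h^r_B(\xi^\mu-\bz)=-\sum_{\bk\in L(m,\ba)\setminus\{\mathbf 0\}}\hat h^r_B(\bk)\,e^{-2\pi i(\bk,\bz)} .
\ee
Hence, uniformly in $B$, the triangle inequality gives $\tilde D^r_\infty(\K_m(\ba),v)\le\sup_{\mathrm{vol}(B)=v}\sum_{\bk\in L(m,\ba)\setminus\{\mathbf 0\}}|\hat h^r_B(\bk)|$, and I would use the elementary bound $|\hat h^r_{u_j}(k_j)|\le\min\!\big(u_j,(\pi|k_j|)^{-1}\big)^r$.

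The single place where the Korobov structure enters is a lattice–point count. Split the frequencies into dyadic boxes $\rho(\bs):=\{\bk:2^{s_j-1}\le|k_j|<2^{s_j}\ (s_j\ge1),\ k_j=0\ (s_j=0)\}$ and write $|\bs|_1:=s_1+\dots+s_d$. The key observation is that \emph{any} box $\prod_j[\alpha_j,\alpha_j+\ell_j)$ with integer side–lengths $\ell_j\ge1$ and $\prod_j\ell_j\le L$ contains at most one point of $L(m,\ba)$: the difference of two such points is a nonzero lattice point $\bw$ with $\prod_j\max(|w_j|,1)\le\prod_j\ell_j\le L$, i.e.\ $\bw\in\Gamma(L,d)\cap\big(L(m,\ba)\setminus\{\mathbf 0\}\big)$, contradicting \eqref{exs}. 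Partitioning $\rho(\bs)$ into such boxes yields $N_\bs:=|L(m,\ba)\cap\rho(\bs)|\le C(d)\max\!\big(1,2^{|\bs|_1}/L\big)$.

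For the assembly, note that on $\rho(\bs)$ one has $|\hat h^r_B(\bk)|\le C(d,r)\prod_j\min(u_j,2^{-s_j})^r$, so the block contributes at most $C\,N_\bs\prod_j\min(u_j,2^{-s_j})^r$, and by exactness only blocks with $2^{|\bs|_1}\gtrsim L$ occur. Recalling $\prod_ju_j=v/r^d$ and setting $\sigma_j:=\log_2(1/u_j)$, the transition $s_j\asymp\sigma_j$ in each coordinate is what lets the volume enter: the number of $\bs$ with $|\bs|_1\asymp\log_2L$ and $s_j\gtrsim\sigma_j$ for all $j$ is $\asymp(\log_2(Lv))^{d-1}$, and each such block has $\prod_j\min(u_j,2^{-s_j})^r\asymp2^{-r|\bs|_1}\asymp L^{-r}$. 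Using $N_\bs\le C(d)$ on these \emph{critical} blocks gives a contribution $\asymp L^{-r}(\log(Lv))^{d-1}$. On the \emph{super–critical} blocks I would instead use $N_\bs\lesssim2^{|\bs|_1}/L$, so their total is $L^{-1}\sum_{|\bs|_1>\log_2L}\prod_j2^{s_j}\min(u_j,2^{-s_j})^r$; since $r\ge2$ each factor $2^{s_j}\min(u_j,2^{-s_j})^r$ decays like $2^{(1-r)s_j}$ beyond $s_j=\sigma_j$, the same composition count produces $\asymp L^{-1}\cdot L^{1-r}(\log(Lv))^{d-1}=L^{-r}(\log(Lv))^{d-1}$. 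Summing the two regimes gives $\tilde D^r_\infty(\K_m(\ba),v)\le C(d)L^{-r}(\log(Lv))^{d-1}$.

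I expect the main obstacle to be the bookkeeping in the last step: one must follow the per–coordinate transition at $|k_j|\sim1/u_j$ carefully enough that the volume appears only through $\log(Lv)$ (not $\log L$), and check that the critical and super–critical regimes balance at the same order $L^{-r}(\log(Lv))^{d-1}$. It is exactly the sparsity count $N_\bs\lesssim\max(1,2^{|\bs|_1}/L)$ that upgrades the naive ``continuous'' estimate $L^{1-r}(\log(Lv))^{d-1}$ (summing the block maxima alone over the complement of $\Gamma(L,d)$) to the sharp power $L^{-r}$. The whole scheme is the $\ell_1$ (triangle–inequality) analogue of the $\ell_2$ (Parseval) summation behind Theorem \ref{IT.3}: replacing $|\hat h^r_B(\bk)|$ by $|\hat h^r_B(\bk)|^2$ and summing over the same $\asymp(\log(Lv))^{d-1}$ critical blocks halves the log–exponent, which is precisely why the $L_\infty$ bound carries $(\log(Lv))^{d-1}$ against the $(\log(Lv))^{(d-1)/2}$ of the $L_p$ bound.
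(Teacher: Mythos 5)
Your proposal is correct and follows essentially the same route as the paper's proof: expand the error in a Fourier series over $L(m,\ba)\setminus\{\mathbf 0\}$, apply the triangle inequality blockwise over the dyadic blocks $\rho(\bs)$, bound the per-block lattice count by $C(d)\max\bigl(1,2^{\|\bs\|_1}/L\bigr)$ (the paper's \eqref{eq:number}), and sum using the hyperbolic-cross composition count, which is exactly the content of the paper's Lemma \ref{L2.2} and of its estimate $\sum_{\|\bs\|_1=t}H^r_B(\bs)\le C_1 2^{-rt}(\log(2^t v))^{d-1}$. The only substantive differences are welcome additions: you actually prove the counting bound via the difference-of-two-lattice-points argument (the paper merely asserts it as a consequence of \eqref{exs}), and you re-derive the content of Lemma \ref{L2.2} inline by counting compositions in the critical and super-critical regimes rather than citing it.
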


\section{Proofs of Theorems \ref{IT.3} and \ref{IT.4}}
\label{Fib}

The proofs of both theorems go along the same lines. We give a detailed proof of Theorem \ref{IT.3} and point out a change of this proof, which gives Theorem \ref{IT.4}.
For continuous functions of $d$
variables, which are $1$-periodic in each variable, consider the Korobov cubature formula 
 $P_m(\cdot,\ba)$. 
 
For the univariate \emph{test functions} $h^r_u$ we obtain
by the properties of convolution that
$$
\hat h^r_u(y)  = \hat h^{r-1}_u(y)\, \hat h^1_u(y),\qquad y\in\R
$$
which implies for $y\neq 0$
$$
\hat h^r_u(y) = \left(\frac{\sin(\pi yu)}{\pi y}\right)^r.
$$
Therefore,
$$
\left|\hat {\tilde h}^r_u(k)\right| 
\,\le\, \min\left(|u|^r,\frac{1}{|k|^r}\right) 
\,=\, \left(\frac{|u|}{k'}\right)^{r/2}\min\left(|k' u|^{r/2},\frac{1}{|ku|^{r/2}}\right), 
$$
where $k':=\max\{1,|k|\}$. 
(Here, we used for a moment $\hat h$ for the Fourier transform of $h$ on $\R$. 
This should not lead to any confusion.)

\bigskip

It is convenient for us to use the following abbreviated notation for the product
$$
pr(\bu):= pr(\bu,d) := \prod_{j=1}^d u_j.
$$
For $B\subset\Omega_d$ of the form~\eqref{eq:B} and $\bz\in\Omega_d$, we have 
\be\label{eq:shift}
\hat {\tilde h}^r_{B+\bz}(\bk) \,=\, e^{-i2\pi(\bk,\bz)}\, \hat {\tilde h}^r_B(\bk),
\ee
where $\tilde h^r_{B+\bz}(\bx):=\tilde h^r_{B}(\bx-\bz)$, see~\eqref{eq:hB}. 
Therefore, we obtain from the above that
\[
\left|\hat {\tilde h}^r_B(\bk)\right| 
\,\le\, \prod_{j=1}^d \left(\frac{|u_j|}{k_j'}\right)^{r/2}\min\left(|k_j' u_j|^{r/2},\frac{1}{|k_j u_j|^{r/2}}\right).
\]
For $\bs\in \N_0^d$, 
we define
$$
\rho(\bs) := \Big\{\bk \in \Z^d\colon [2^{s_j-1}] \le |k_j| < 2^{s_j}, 
\quad j=1,\dots,d \Big\},
$$
where $[a]$ denotes the integer part of $a$, and obtain, 
for $\bk\in\rho(\bs)$, that  
\be\label{eq:HB}
\left|\hat {\tilde h}^r_B(\bk)\right| 
\,\le\, H_B^r(\bs) 
\,:=\, \left(\frac{pr(\bu)}{2^{\|\bs\|_1}}\right)^{r/2}\,\prod_{j=1}^d \min\left((2^{s_j}u_j)^{r/2},\frac{1}{(2^{s_j}u_j)^{r/2}}\right).
\ee
Later we will need certain sums of these quantities. 
First, consider
$$
\sigma^r_\bu(t) \,:=\, \sum_{\|\bs\|_1=t}\prod_{j=1}^d \min\left((2^{s_j}u_j)^{r/2},\frac{1}{(2^{s_j}u_j)^{r/2}}\right),\quad t\in\N_0.
$$
The following technical lemma is part~(I) from \cite[Lemma 6.1]{VT161}. 

\begin{Lemma}\label{L2.2} Let $r>0$, $t\in \N$ and $\bu\in (0,1/2]^d$ 
be such that $pr(\bu)\ge 2^{-t}$. Then, we have 
\[
\sigma^r_\bu(t) \,\le\, C(d) \frac{\left(\log(2^{t+1}pr(\bu))\right)^{d-1}}{(2^tpr(\bu))^{r/2}}.
\]
\end{Lemma}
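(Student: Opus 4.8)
The plan is to reduce the estimate to a purely combinatorial counting problem. Writing $a_j := 2^{s_j}u_j$, the elementary identity $\min(a^{r/2},a^{-r/2}) = a^{-r/2}\min(a^r,1)$ (checked separately on $a\le 1$ and on $a\ge 1$) turns each summand into
$$
\prod_{j=1}^d \min\left((2^{s_j}u_j)^{r/2},(2^{s_j}u_j)^{-r/2}\right) = \left(\prod_{j=1}^d 2^{s_j}u_j\right)^{-r/2}\prod_{j=1}^d \min\left((2^{s_j}u_j)^r,1\right).
$$
Since $\sum_j s_j=t$ throughout the range of summation, the first factor equals $P^{-r/2}$ with $P:=2^t\,pr(\bu)$, which is constant and, by the hypothesis $pr(\bu)\ge 2^{-t}$, satisfies $P\ge 1$. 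Pulling it out, it remains to prove
$$
\Sigma := \sum_{\|\bs\|_1=t}\ \prod_{j=1}^d \min\left((2^{s_j}u_j)^r,1\right)\ \le\ C(d)\,(\log(2P))^{d-1},
$$
since $\log(2^{t+1}pr(\bu))=\log(2P)$ and $(2^t pr(\bu))^{r/2}=P^{r/2}$, so this is exactly the asserted bound.

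Next I would split $\Sigma$ according to the set $A:=\{j:2^{s_j}u_j\ge 1\}$ of coordinates on which the minimum equals $1$, and its complement $B$, on which the minimum equals the genuinely small quantity $(2^{s_j}u_j)^r<1$. Because $\prod_j 2^{s_j}u_j=P\ge 1$, the choice $A=\varnothing$ is impossible (it would force $\prod_j a_j<1$), so $|A|\ge 1$ throughout. For a fixed partition $(A,B)$ and fixed values $\{s_j\}_{j\in B}$, the summand is the constant $\prod_{j\in B}(2^{s_j}u_j)^r$, and the inner sum over $\{s_j\}_{j\in A}$ merely counts the integer points with $s_j\ge \log_2(1/u_j)$ and $\sum_{j\in A}s_j=t-\sum_{j\in B}s_j$. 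This count is a binomial coefficient $\binom{M+|A|-1}{|A|-1}\le C(d)(M+1)^{|A|-1}$, where a short computation using $\sum_{j}\log_2(1/u_j)=t-\log_2 P$ yields $M\le \log_2 P+\sum_{j\in B}\big|\log_2(2^{s_j}u_j)\big|$.

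Finally, summing over the ``small'' coordinates $\{s_j\}_{j\in B}$ is where the main work lies: one must check that the geometric decay of $\prod_{j\in B}(2^{s_j}u_j)^r$ as the $s_j$ decrease dominates the polynomial factor $(M+1)^{|A|-1}$. Bounding $(M+1)^{|A|-1}\le C(d)\big((\log_2 P+1)^{|A|-1}+(\sum_{j\in B}|\log_2(2^{s_j}u_j)|)^{|A|-1}\big)$ and using, for each $j\in B$ and each exponent $k\le d-1$, the convergence of the one-dimensional series $\sum_{s_j:\,2^{s_j}u_j<1}(2^{s_j}u_j)^r\,\big|\log_2(2^{s_j}u_j)\big|^{k}\le C(d,r)$ (finite since $r>0$), the total contribution of the partition $(A,B)$ is at most $C(d,r)(\log_2 P+1)^{|A|-1}\le C(d,r)(\log(2P))^{d-1}$. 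Summing over the at most $2^d$ partitions then gives the claim, with the $r$-dependence absorbed into $C(d)$. The main obstacle is precisely this last step: tracking how the lattice-point count $M$ depends on the small coordinates and verifying that the resulting mixed polynomial-times-geometric sums converge uniformly, together with the minor bookkeeping caused by the non-integer thresholds $\log_2(1/u_j)$.
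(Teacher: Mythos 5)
First, a point of reference: this paper does not actually prove Lemma~\ref{L2.2} --- it is imported verbatim as part~(I) of \cite[Lemma 6.1]{VT161} --- so there is no internal proof to compare against, and your argument stands as a self-contained derivation. It is correct, and it is in the same spirit as the standard hyperbolic-cross counting arguments behind such lemmas. The key steps all check out: the identity $\min(a^{r/2},a^{-r/2})=a^{-r/2}\min(a^r,1)$ together with the constraint $\|\bs\|_1=t$ correctly extracts the constant factor $P^{-r/2}$ with $P:=2^t pr(\bu)\ge 1$; the observation that $A\neq\varnothing$ (because $\prod_j 2^{s_j}u_j=P\ge 1$) is exactly what caps the power of the logarithm at $d-1$ via $|A|-1\le d-1$; the lattice-point count $\binom{M+|A|-1}{|A|-1}\le (M+1)^{|A|-1}$ with $M\le \log_2 P+\sum_{j\in B}\bigl|\log_2(2^{s_j}u_j)\bigr|$ is right (using $\sum_j\log_2(1/u_j)=t-\log_2 P$ and $\lceil x\rceil\ge x$); and the one-dimensional sums $\sum_{s\,:\,2^su<1}(2^su)^r\bigl|\log_2(2^su)\bigr|^k\le\sum_{m\ge0}2^{-rm}(m+1)^k$ converge for every $r>0$, since the admissible values $2^su$ decrease geometrically from a value in $[1/2,1)$.

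One caveat: your closing remark that the $r$-dependence can be ``absorbed into $C(d)$'' is not literally possible, and this is not a defect of your argument but an imprecision in the lemma as stated. Your constants behave like $\sum_{m\ge0}2^{-rm}(m+1)^k\asymp r^{-k-1}$ as $r\to0^+$, and some blow-up is unavoidable: for $d=2$, $u_1=u_2=2^{-t/2}$ (so $P=1$) the left-hand side equals $\sum_{s_1=0}^t 2^{-r|s_1-t/2|}\gtrsim\min(t,1/r)$, while the right-hand side is $C(2)\log 2$; hence no constant independent of $r$ can work for all $r>0$. The lemma must be read with $r$ fixed, i.e.\ $C=C(d,r)$. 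What this paper actually needs is the range $r\ge1$ of \eqref{eq:HB-bound}, where your constants are uniform in $r$ because $\sum_m 2^{-rm}(m+1)^k\le\sum_m 2^{-m}(m+1)^k$; it would strengthen your write-up to state that uniformity explicitly.
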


\bigskip

\noindent
This lemma and \eqref{eq:HB} imply that
\be\label{eq:HB-bound}
\sum_{\|\bs\|_1=t} H_B^r(\bs)^2 
\,\le\, C_1\, 2^{-2rt}\, \Big(\log\big(2^{t}\, v\big)\Big)^{d-1}, 
\ee
where $v:=\mathrm{vol}(B)=r^d\, pr(\bu)$, 
for all $r\ge1$ and
all $t\in\N_0$ with $v\ge r^d\,2^{-t+1}$
and an absolute constant $C_1<\infty$.

\bigskip

Additionally, we need a result from harmonic analysis -- a corollary of the Littlewood-Paley theorem. Denote
$$
\delta_\bs(f,\bx):= \sum_{\bk\in\rho(\bs)} \hat f(\bk)e^{i2\pi(\bk,\bx)}.
$$
Then it is known that for $p\in [2,\infty)$ one has
\be\label{eq:LP}
\|f\|_p \le C(d,p) \left(\sum_{\bs\in\N_0^d}\|\delta_\bs(f)\|_p^2\right)^{1/2}.
\ee
Note that in the proof of Theorem \ref{IT.4} we use 
the simple triangle inequality
\be\label{eq:triangle}
\|f\|_\infty \le  \sum_{\bs}\|\delta_\bs(f)\|_\infty.
\ee
instead of \eqref{eq:LP}.

\bigskip

Let us define
\be\label{2.3}
E^r_B(\bz):=  \frac{1}{m}\sum_{\mu=1}^{m}  \tilde h^r_B(\by^\mu-\bz)- \int_{[0,1)^d} \tilde h^r_B(\bx)d\bx 
\ee 
such that 
\[
\tilde D^{r}_p(\K_m(\ba),v) \,=\, \sup_{B\subset \Omega_d:vol(B)=v}
	\left\|E^r_B\right\|_p
\]
By formulas \eqref{2.1}, \eqref{2.2} and \eqref{eq:shift} we obtain
\[
E^r_B(\bz) \,=\, \sum_{\bk\neq 0} \hat {\tilde h}^r_B(\bk)\,S(\bk,\ba)\, e^{-i2\pi(\bk,\bz)}
\,=\, \sum_{\bk\in L(m,\ba)\setminus\{0\}} \hat {\tilde h}^r_B(\bk)\, e^{-i2\pi(\bk,\bz)}.
\]
It is apparent from~\eqref{eq:LP} that it remains to bound 
$\|\delta_s(E_B^r)\|_p$. 

If $t\neq 0$ is such that $2^t\le L$ then for
$\bs$ with $\|\bs\|_1=t$ we have $\rho(\bs)\subset \Gamma(L)$. 
Then our assumption \eqref{exs} implies that 
$S(\bk,\ba)=0$ for $\bk\in\rho(\bs)$ and, therefore, 
$\delta_s(E^r_B)=0$. 
Let $t_0\in \N$ be the smallest number satisfying 
$2^{t_0}>L$, 
i.e., $t_0>\log L$.  
Then, from~\eqref{eq:LP} for $p\in[2,\infty)$, we have 
\be\label{eq:EB-bound}
\|E^r_B\|_p \le C(p) \left(\sum_{t=t_0}^\infty \sum_{\|\bs\|_1=t}\|\delta_s(E^r_B)\|_p^2\right)^{1/2}.
\ee
Moreover, \eqref{exs} implies that for $t\ge t_0$ we have
\be\label{eq:number}
\#\big(\rho(\bs)\cap L(m,\ba)\big) \le C_2\, 2^{t-t_0}, \quad \|\bs\|_1=t.
\ee
By Parseval's identity we obtain
\[
\|\delta_s(E^r_B)\|_2 \,=\, \sqrt{\sum_{\bk\in\rho(\bs)\cap L(m,\ba)}|\hat {\tilde h}^r_B(\bk)|^2}
\,\le\, \sqrt{\#\big(\rho(\bs)\cap L(m,\ba)\big)}\cdot H_B^r(\bs)
\]
and, by the triangle inequality, 
\[
\|\delta_s(E_B)\|_\infty \,\le\, \#\big(\rho(\bs)\cap L(m,\ba)\big)\cdot H_B^r(\bs)
\]
Hence, using the inequality 
$$
\|f\|_p \le \|f\|_2^{2/p}\|f\|_\infty^{1-2/p}
$$
for $2\le p\le \infty$, we get 
\[
\|\delta_s(E^r_B)\|_p \,\le\, \Big(\#\big(\rho(\bs)\cap L(m,\ba)\big)\Big)^{1-1/p}\cdot H_B^r(\bs).
\]
Combining this with \eqref{eq:HB-bound}, \eqref{eq:EB-bound} and \eqref{eq:number}, 
we finally obtain for all $v=\mathrm{vol}(B)\ge 2r^d 2^{-t_0}$ and $p\in[2,\infty)$ that
 
\[\begin{split}
\|E^r_B\|_p \,&\le\, C \left(\sum_{t=t_0}^\infty 2^{2(t-t_0)(1-1/p)}  
	\sum_{\|\bs\|_1=t}H_B^r(\bs)^2\right)^{1/2} \\
\,&\le\, C' \left(\sum_{t=t_0}^\infty 2^{2(t-t_0)(1-1/p)}\, 
	2^{-2rt}\, \left(\log\big(2^{t}\, v\big)\right)^{d-1}\right)^{1/2} \\
\,&=\, C'\, 2^{-r t_0} \left(\sum_{t=0}^\infty 2^{2t(1-1/p-r)}\, 
	\left(\log\big(2^{t+t_0}\, v\big)\right)^{d-1}\right)^{1/2} \\
\,&\le\, C''\, 2^{-r t_0}\, \left(\log\big(2^{t_0}\, v\big)\right)^{(d-1)/2} 
	\left(\sum_{t=0}^\infty t\, 2^{2t(1-1/p-r)}\right)^{1/2}.
\end{split}\]
Using $t_0\ge\log L$ and that $\|E^r_B\|_p\le\|E^r_B\|_2$ for $p<2$, 
this implies Theorem~\ref{IT.3}. 
(Here, we used that clearly $r> 1-1/p$ for $p<\infty$.)

As we pointed out above, in the proof of Theorem \ref{IT.4} 
we use inequality \eqref{eq:triangle} instead of \eqref{eq:LP}. 
Moreover we use 
\[
\sum_{\|\bs\|_1=t} H_B^r(\bs) 
\,\le\, C_1\, 2^{-rt}\,\left(\log\big(2^{t}\, v\big)\right)^{d-1}, 
\]
for all $r\ge1$ instead of~\eqref{eq:HB-bound}. 
However, note that we need $r>1$ for the last series in the 
above computation to be finite.
This implies
$$
\|E_B^r\|_\infty \,\le\, C\, L^{-r}\left(\log\left(L\,v\right)\right)^{d-1}. 
$$

\section{Dispersion of the Korobov point sets}
\label{F}

 We remind the definition of 
dispersion. Let $d\ge 2$ and $[0,1)^d$ be the $d$-dimensional unit cube. For $\bx,\by \in [0,1)^d$ with $\bx=(x_1,\dots,x_d)$ and $\by=(y_1,\dots,y_d)$ we write $\bx < \by$ if this inequality holds coordinate-wise. For $\bx<\by$ we write $[\bx,\by)$ for the axis-parallel box $[x_1,y_1)\times\cdots\times[x_d,y_d)$ and define
$$
\cB:= \{[\bx,\by): \bx,\by\in [0,1)^d, \bx<\by\}.
$$
For $n\ge 1$ let $T$ be a set of points in $[0,1)^d$ of cardinality $|T|=n$. The volume of the largest empty (from points of $T$) axis-parallel box, which can be inscribed in $[0,1)^d$, is called the dispersion of $T$:
$$
\text{disp}(T) := \sup_{B\in\cB: B\cap T =\emptyset} vol(B).
$$
An interesting extremal problem is to find (estimate) the minimal dispersion of point sets of fixed cardinality:
$$
\text{disp*}(n,d) := \inf_{T\subset [0,1)^d, |T|=n} \text{disp}(T).
$$
It is known that 
\be\label{F.1}
\text{disp*}(n,d) \le C^*(d)/n.
\ee
Inequality (\ref{F.1}) with $C^*(d)=2^{d-1}\prod_{i=1}^{d-1}p_i$, where $p_i$ denotes the $i$th prime number, was proved in \cite{DJ} (see also \cite{RT}). The authors of \cite{DJ} used the Halton-Hammersly set of $n$ points (see \cite{Mat}). Inequality (\ref{F.1}) with $C^*(d)=2^{7d+1}$ was proved in 
\cite{AHR}. The authors of \cite{AHR}, following G. Larcher, used the $(t,r,d)$-nets (see \cite{NX}, \cite{Mat} for results on $(t,r,d)$-nets and Definition \ref{GD.1} below for the definition). 

\begin{Definition}\label{GD.1} A $(t,r,d)$-net (in base $2$) is a set $T$ of $2^r$ points in 
$[0,1)^d$ such that each dyadic box $[(a_1-1)2^{-s_1},a_12^{-s_1})\times\cdots\times[(a_d-1)2^{-s_d},a_d2^{-s_d})$, $1\le a_j\le 2^{s_j}$, $j=1,\dots,d$, of volume $2^{t-r}$ contains exactly $2^t$ points of $T$.
\end{Definition}

It was demonstrated in \cite{VT161} how good upper bounds on fixed volume discrepancy can be used for proving good upper bounds for dispersion. This fact was one of the motivation for studying the fixed volume discrepancy. 
Theorem \ref{FT.1} below was derived from Theorem \ref{ET.1} (see \cite{VT161}). The upper bound in Theorem \ref{FT.1} combined with 
the trivial lower bound shows that the Fibonacci point set provides optimal rate of decay for the dispersion. 

\begin{Theorem}\label{FT.1} There is an absolute constant $C$ such that for all $n$ we have
\be\label{F.2}
\disp (\cF_n) \le C/b_n.
\ee
\end{Theorem}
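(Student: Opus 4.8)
The plan is to derive the dispersion bound directly from the fixed volume discrepancy estimate of Theorem~\ref{ET.1} (applied in dimension $d=2$, which is the setting of $\cF_n$) by an extremal/contrapositive argument. The key mechanism is that a large empty box $B$ forces the non-periodic fixed volume discrepancy $D^r(\cF_n,\cdot)$ to be large, because the smooth hat function $h^r_B$ is supported \emph{exactly} on $B$; the author's upper bound for $D^r(\cF_n,\cdot)$ then caps how large an empty box can be. I would fix once and for all a smoothness $r\ge 2$, say $r=2$, so that Theorem~\ref{ET.1} is available; the resulting constant will depend only on this fixed $r$ and hence be absolute.

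First I would take an arbitrary box $B=[\bx,\by)\in\cB$ with $B\cap\cF_n=\varnothing$ and $\mathrm{vol}(B)=v$, and write it in the centered form~\eqref{eq:B} by setting $z_j:=(x_j+y_j)/2$ and $u_j:=(y_j-x_j)/r$, so that $\prod_j[z_j-ru_j/2,z_j+ru_j/2)=[\bx,\by)$. Since $h^r_{u}$ has support $(-ru/2,ru/2)$, the support of $h^r_B$ is the open box $\prod_j(z_j-ru_j/2,z_j+ru_j/2)$, which is contained in $B$. Emptiness of $B$ therefore gives $h^r_B(\xi^\mu)=0$ for every $\mu$, so $\frac1m\sum_\mu h^r_B(\xi^\mu)=0$, and Definition~\ref{ED.1} yields
\[
D^r(\cF_n,v)\;\ge\;\left|\int_{\Omega_d}h^r_B(\bx)\,d\bx\right|.
\]

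Next I would evaluate the mass of $h^r_B$. From $\int_\R h^1_u=u$ and $\int_\R(f\ast g)=\big(\int_\R f\big)\big(\int_\R g\big)$ one gets $\int_\R h^r_u=u^r$ by induction, and since $\supp h^r_B\subset\Omega_d$ this gives $\int_{\Omega_d}h^r_B=\prod_{j}u_j^{\,r}=(pr(\bu))^r$. Using $v=\mathrm{vol}(B)=r^d\,pr(\bu)$, the lower bound becomes $D^r(\cF_n,v)\ge (v/r^d)^r$. Combining this with Theorem~\ref{ET.1}, every empty box with $v\ge c/b_n$ must satisfy
\[
\Big(\tfrac{v}{r^d}\Big)^{r}\;\le\;C\,\frac{\log(b_nv)}{b_n^{\,r}},
\qquad\text{i.e.}\qquad (b_nv)^r\;\le\;C\,r^{dr}\,\log(b_nv).
\]

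Finally, writing $w:=b_nv$ and using that $w^r/\log w\to\infty$ (with $r=2$, $d=2$ fixed), the inequality $w^r\le C'\log w$ forces $w\le w_0$ for some absolute constant $w_0$. Hence any empty box obeys either $v<c/b_n$ or $v\le w_0/b_n$, so in all cases $\mathrm{vol}(B)\le C/b_n$ with $C$ absolute; taking the supremum over empty boxes gives $\disp(\cF_n)\le C/b_n$. I expect the only genuinely delicate step to be the first one — verifying that emptiness of the half-open box $B$ produces the \emph{exact} vanishing of the discrete sum — which hinges on the support of $h^r_B$ coinciding with $B$; the mass computation and the elementary fact that a power eventually dominates a logarithm are routine.
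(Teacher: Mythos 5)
Your proof is correct, and its core mechanism is exactly the one the paper uses: an empty box $B$ makes the discrete sum over the point set vanish (because $\supp h^r_B\subset B$), so the integral $\int h^r_B\,d\bx \ge c\,v^r$ is trapped below the fixed volume discrepancy, and comparing a power of $v$ with a logarithm forces $b_n v$ to be bounded. All the individual steps check out: the centered rewriting of $[\bx,\by)$, the mass computation $\int_{\Omega_d}h^r_B=(pr(\bu))^r=(v/r^d)^r$, and the contrapositive conclusion.

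The only difference from the paper is which discrepancy bound you feed into this mechanism. You apply the \emph{non-periodic} Fibonacci bound of Theorem~\ref{ET.1} directly; this is in fact the route of \cite{VT161}, which the paper explicitly credits for Theorem~\ref{FT.1}. The present paper instead obtains Theorem~\ref{FT.1} as a corollary of its new Korobov result: Theorem~\ref{FT.2} (whose proof is precisely your empty-box argument, run with the \emph{periodic} discrepancy bound of Theorem~\ref{IT.4}) combined with Lemma~\ref{FL.1}, which says the Fibonacci cubature formula is exact on the hyperbolic cross $\Gamma(\gamma b_n,2)$, so that Theorem~\ref{FT.2} applies with $L\asymp b_n$. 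Your version is more self-contained given Theorem~\ref{ET.1} and avoids the periodic machinery altogether; the paper's version buys generality, exhibiting the Fibonacci case as a special instance of a dispersion bound valid for any Korobov point set that is exact on a hyperbolic cross. One small aesthetic point in your favor: working with the non-periodic $D^r$ lets you skip the observation, needed in the paper's argument, that $\tilde h^r_B=h^r_B$ for $B\in\cB$.
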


The reader can find further recent results on dispersion in \cite{Rud}, \cite{Sos}, \cite{Ull}, and \cite{BH19}. 

We now proceed to new results on dispersion of the Korobov point sets. 

\begin{Theorem}\label{FT.2}   Suppose that $P_m(\cdot,\ba)$ is exact on $\Tr(L,d)$ with some $L\in\N$, $L\ge 2$. 
There exists a constant $C_1(d)>0$ such that   
we have 
$$
\disp(\K_m(\ba)) \,\le\, C_1(d)L^{-1}.
$$
\end{Theorem}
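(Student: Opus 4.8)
The plan is to derive Theorem~\ref{FT.2} from the fixed volume discrepancy bound of Theorem~\ref{IT.4} in exactly the way Theorem~\ref{FT.1} was derived from Theorem~\ref{ET.1} in \cite{VT161}. The key mechanism is a \emph{contrapositive counting argument}: if the Korobov point set $\K_m(\ba)$ had an empty axis-parallel box of large volume, then a suitable smooth hat function $\tilde h^r_B$ supported near that box would have a positive integral over $\Omega_d$ but would vanish at every node $\by^\mu$, forcing the fixed volume discrepancy $E^r_B(\bz)$ to be large at some $\bz$. Comparing this with the upper bound on $\tilde D^r_\infty(\K_m(\ba),v)$ from Theorem~\ref{IT.4} then yields a contradiction once the box volume exceeds a constant multiple of $L^{-1}$.

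More concretely, I would first fix the smoothness parameter, say $r=2$, so that Theorem~\ref{IT.4} applies and gives $\tilde D^2_\infty(\K_m(\ba),v)\le C(d)L^{-2}(\log(Lv))^{d-1}$ for all $v\ge c(d)/L$. Suppose for contradiction that $\K_m(\ba)$ admits an empty box $Q=[\bx,\by)\in\cB$ with $\mathrm{vol}(Q)\ge AL^{-1}$ for a large constant $A=A(d)$ to be chosen. Inside $Q$ I would inscribe a box $B$ of the form~\eqref{eq:B} with side lengths $ru_j$ chosen proportional to the side lengths of $Q$, so that the support of $h^r_B$ lies strictly inside $Q$ and hence contains no node of $\K_m(\ba)$; this makes the sum term in~\eqref{E.1} vanish. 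The volume $v=\mathrm{vol}(B)=r^d\,pr(\bu)$ is then a fixed fraction of $\mathrm{vol}(Q)$, so $v\ge c(d)/L$ is satisfied for $A$ large. Because the hat function has a strictly positive integral $\int_{\Omega_d}\tilde h^r_B(\bx)\,d\bx = (pr(\bu))\cdot c_r^d$ for an explicit constant $c_r>0$ depending only on $r$, this integral is bounded below by a constant multiple of $v\asymp L^{-1}$.

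Evaluating the discrepancy expression~\eqref{2.3} at the shift $\bz=\mathbf 0$ (after translating $B$ appropriately, using~\eqref{eq:shift}), the empty-box property gives $E^2_B(\mathbf 0)=-\int_{\Omega_d}\tilde h^2_B(\bx)\,d\bx$, whose absolute value is at least a constant times $v\gtrsim L^{-1}$. Hence $\tilde D^2_\infty(\K_m(\ba),v)\ge c'(d)L^{-1}$. Combining with Theorem~\ref{IT.4} forces $c'(d)L^{-1}\le C(d)L^{-2}(\log(Lv))^{d-1}$, i.e. $L\le C''(d)(\log(Lv))^{d-1}$. Since $v\le 1$, we have $\log(Lv)\le\log L$, so this reads $L\le C''(d)(\log L)^{d-1}$, which fails for all sufficiently large $L$; for the remaining finitely many small $L$ the trivial bound $\disp(\K_m(\ba))\le 1\le (1/L_0)\cdot L_0 \le C_1(d)L^{-1}$ absorbs the constant. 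This contradiction shows $\disp(\K_m(\ba))\le C_1(d)L^{-1}$.

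The main obstacle I expect is the geometric step of inscribing $B$ inside an \emph{arbitrary} empty box $Q$ while keeping $\mathrm{vol}(B)$ comparable to $\mathrm{vol}(Q)$ and keeping the \emph{periodized} support $\supp\tilde h^r_B$ node-free. The subtlety is that a thin, highly anisotropic empty box (one very short side) cannot host a hat function of comparable volume without the support leaking across the short direction; one must verify that the periodization does not wrap the support back onto a node, and that shrinking the long sides compensates so that $v\ge c(d)/L$ still holds. The clean resolution, as in \cite{VT161}, is to argue that it suffices to rule out empty boxes whose side lengths are each at least $c(d)/L$ (since any larger-volume empty box contains such a sub-box by a standard splitting of the coordinate directions), and for such boxes the inscribed hat function is comfortably supported away from the boundary of $Q$, making the periodization harmless. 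Handling this reduction carefully is the crux; the rest is the bookkeeping sketched above.
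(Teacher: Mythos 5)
Your overall strategy is the paper's: apply Theorem~\ref{IT.4} to the smooth hat function supported on an empty box and compare the resulting upper bound with a lower bound on $\int h^r_B$. But there is a genuine quantitative error at the heart of your argument: you claim $\int_{\Omega_d}\tilde h^r_B(\bx)\,d\bx = pr(\bu)\cdot c_r^d \asymp v$. This is false. Since $\int_\R h^1_u = u$ and convolution multiplies integrals, $\int_\R h^r_u = u^r$, so $\int h^r_B = \prod_j u_j^r = \big(pr(\bu)\big)^r = r^{-dr}v^r$; the correct lower bound (the paper's \eqref{F4}) is $c_1(d)\,v^r$, not $c'(d)\,v$. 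This matters because your contradiction scheme with a fixed large constant $A$ collapses once the exponent is corrected: assuming an empty box of volume $v\ge AL^{-1}$ and using $r=2$, the comparison becomes $c_1(d)v^2 \le C(d)L^{-2}(\log(Lv))^{d-1}$, i.e. $A^2 \le C''(d)(\log(Lv))^{d-1} \le C''(d)(\log L)^{d-1}$, which is \emph{not} a contradiction for large $L$ — the right-hand side grows with $L$ while $A$ is fixed. Your claimed contradiction $L\le C''(d)(\log L)^{d-1}$ only comes out because the erroneous bound $\asymp v$ supplies an extra factor of $L$.

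The repair is exactly the paper's (direct, not by contradiction) argument: with the correct bound, $c_1(d)v^r \le C(d)L^{-r}(\log(Lv))^{d-1}$ is an inequality in the single quantity $Lv$, namely $(Lv)^r \le C'(d)(\log(Lv))^{d-1}$, and since a power beats a logarithm this forces $Lv\le C'(d)$ for \emph{every} empty box with $v\ge c(d)/L$; together with the trivial case $v\le c(d)/L$ this gives $\disp(\K_m(\ba))\le \max(c(d),C'(d))L^{-1}$, uniformly in $L$, with no need to treat small $L$ separately. Two smaller remarks: the inscribing step and the periodization worries you flag are unnecessary — any $B\in\cB$ can itself be written in the form~\eqref{eq:B} (take $z_j$ the center and $u_j$ the side length divided by $r$), the support of $h^r_B$ is exactly $B$, and for $B\subset\Omega_d$ one has $\tilde h^r_B = h^r_B$ on $\Omega_d$, so no mass wraps around and no volume is lost; the anisotropic-box reduction you describe is a red herring for the same reason.
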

\begin{proof} The proof is based on Theorem \ref{IT.4}. Specify some $r\in \N$, $r\ge 2$. Then, by Theorem \ref{IT.4} we have
$$
\tilde D^{r}_\infty(\K_m(\ba),v) \,\le\, C(d)L^{-r}(\log (Lv))^{d-1}.
$$
 
Let a box $B\in \cB$ be an empty box $B\cap \K_m(\ba) = \varnothing$. Denote $v=vol(B)$.  Note that for $B\in \cB$ we have $\tilde h^r_B(\bx)=h^r_B(\bx)$.  Then, from the Definition \ref{ID.1} of the $\tilde D^{r}_\infty(\K_m(\ba),v)$ it follows that
\be\label{F1}
\left|\int_{\Omega_d}  h^r_B(\bx)d\bx- \frac1m\sum_{\mu=1}^m  h^r_B(\by^\mu)\right|\le \tilde D^{r}_\infty(\K_m(\ba),v).
\ee
Our assumption that $B$ is an empty box implies that
$$
\frac1m\sum_{\mu=1}^m  h^r_B(\by^\mu)=0
$$
and, therefore, by \eqref{F1} we obtain
\be\label{F2}
\left|\int_{\Omega_d}  h^r_B(\bx)d\bx\right|\le \tilde D^{r}_\infty(\K_m(\ba),v).
\ee
If $v\le c(d)L^{-1}$, where $c(d)$ is from Theorem \ref{IT.4}, then Theorem \ref{FT.2} with $C_1(d)\ge c(d)$ follows. Assume $v\ge c(d)L^{-1}$. Then we apply Theorem \ref{IT.4} and obtain from \eqref{F2}
\be\label{F3}
\left|\int_{\Omega_d}  h^r_B(\bx)d\bx\right|\le C(d)L^{-r}(\log (Lv))^{d-1}.
\ee
It follows from the definition of functions $h^r_B(\bx)$ that
\be\label{F4}
\left|\int_{\Omega_d}  h^r_B(\bx)d\bx\right|\ge c_1(d) v^r.
\ee
Inequalities \eqref{F3} and \eqref{F4} imply that $Lv\le C'(d)$. \newline Setting $C_1(d) := \max(c(d),C'(d))$, we complete the proof of Theorem \ref{FT.2}. 
\end{proof}

We now make some comments.

{\bf Fibonacci point sets.} As we already mentioned above we have in the case $d=2$, $m=b_n$, $\ba=(1,b_{n-1})$, that $P_m(f,\ba)=\Phi_n(f)$. Denote in this case $L(n):=L(m,\ba)$.
In other words
$$
L(n) :=\Bigl\{ \mathbf k = (k_1,k_2)\in\Z^2\colon\; k_1 + b_{n-1} k_2\equiv 0
\; \pmod {b_n}\Bigr\}.
$$
The following lemma is well known (see, for instance, \cite{VTbookMA}, p.274).

\begin{Lemma}\label{FL.1} There exists an absolute constant $\gamma > 0$
such that for any $n > 2$ 
for the $2$-dimensional hyperbolic cross 
we have
$$
\Gamma(\gamma b_n,2)\cap \bigl(L(n)\backslash\{\mathbf 0\}\bigr) =
\varnothing.
$$
\end{Lemma}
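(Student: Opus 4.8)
The plan is to prove the equivalent statement that every $\mathbf k=(k_1,k_2)\in L(n)\setminus\{\mathbf 0\}$ satisfies $\max(|k_1|,1)\max(|k_2|,1)\ge c\,b_n$ for an absolute constant $c>0$, after which $\gamma$ may be taken as any positive number below $c$. First I would dispose of the degenerate configurations. If $k_2=0$, then $k_1\equiv 0\pmod{b_n}$ forces $|k_1|\ge b_n$; if $k_1=0$, then since consecutive Fibonacci numbers are coprime, $b_{n-1}k_2\equiv 0\pmod{b_n}$ forces $|k_2|\ge b_n$; and if $\max(|k_1|,|k_2|)\ge b_n$ the product is already $\ge b_n$. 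In all these cases the product exceeds $\gamma b_n$, so it remains to treat $1\le|k_1|,|k_2|<b_n$.

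For this main case I would exploit that $b_{n-1}/b_n$ is a convergent to $\psi:=(\sqrt5-1)/2=1/\phi$, the reciprocal of the golden ratio $\phi=(1+\sqrt5)/2$. Write $k_1+b_{n-1}k_2=b_n m$ with $m\in\Z$ and set $\delta:=b_{n-1}-\psi b_n$. Substituting $b_{n-1}=\psi b_n+\delta$ gives $k_1+\delta k_2=b_n(m-\psi k_2)$, whence, denoting by $\|\cdot\|$ the distance to the nearest integer, $|k_1|\ge b_n\,\|\psi k_2\|-|\delta|\,|k_2|$. Binet's formula (with $b_j=F_{j+1}$) yields $|\delta|\,b_n\le 1/2$ for all $n$, so $|\delta|\,|k_2|\le 1/2$. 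The golden ratio is badly approximable with all partial quotients equal to $1$, which gives the uniform bound $q\,\|q\psi\|\ge 1/3$ for every integer $q\ge1$. Applying this with $q=|k_2|$ produces $|k_1|\ge \dfrac{b_n}{3|k_2|}-\dfrac12$, that is, $|k_1|\,|k_2|\ge \dfrac{b_n}{3}-\dfrac{|k_2|}{2}$.

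It then remains to combine the elementary estimate $|k_1|\,|k_2|\ge b_n/3-|k_2|/2$ with the trivial $|k_1|\,|k_2|\ge|k_2|$. If $|k_2|\le b_n/3$ the first gives $|k_1|\,|k_2|\ge b_n/6$; if $|k_2|>b_n/3$ the second gives $|k_1|\,|k_2|\ge b_n/3$. Hence $\max(|k_1|,1)\max(|k_2|,1)\ge b_n/6$ in every case, and taking $\gamma=1/7$ establishes $\Gamma(\gamma b_n,2)\cap\bigl(L(n)\setminus\{\mathbf 0\}\bigr)=\varnothing$ for all $n>2$.

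The routine parts are the case analysis and the Binet estimate for $|\delta|\,b_n$. The heart of the argument, and the step I expect to require the most care, is the uniform Diophantine lower bound $q\,\|q\psi\|\ge 1/3$. This is classical and follows from the continued fraction expansion $\psi=[0;1,1,1,\dots]$: since all partial quotients equal $1$, the best-approximation denominators are exactly the Fibonacci numbers and one verifies that no denominator $q$ makes $q\,\|q\psi\|$ small, the infimum being $1/\sqrt5$. An equally valid route is to work directly with the finite expansion $b_{n-1}/b_n=[0;1,\dots,1]$ and invoke the standard fact that partial quotients bounded by $M$ force $q'\,\|q'\,b_{n-1}/b_n\|\ge 1/(M+2)$ for all $1\le q'<b_n$, here with $M=1$. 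Either way, the uniformity in $n$ that the lemma demands comes precisely from the boundedness, indeed the constancy, of the partial quotients.
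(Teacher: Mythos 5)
Your proof is correct, and it is worth noting that the paper itself gives no proof of Lemma \ref{FL.1} at all --- it is stated as ``well known'' with a citation to \cite{VTbookMA}, p.~274. Your argument is essentially the classical one found there: the whole content of the lemma is that $b_{n-1}/b_n$ are convergents of $1/\phi$, whose partial quotients are all $1$, so the lattice $L(n)$ stays out of a hyperbolic cross of size proportional to $b_n$. Your execution is sound: the degenerate cases ($k_1=0$, $k_2=0$, or $\max(|k_1|,|k_2|)\ge b_n$) are disposed of correctly using coprimality of consecutive Fibonacci numbers; the substitution $b_{n-1}=\psi b_n+\delta$ with $|\delta|\,b_n\le 1/2$ (valid for $n\ge 1$, hence certainly for $n>2$) is verified by Binet's formula; and the key bound $q\,\|q\psi\|\ge 1/(M+2)=1/3$ for partial quotients bounded by $M=1$ follows from the standard best-approximation inequality $q\,\|q\alpha\|\ge q_k/(q_{k+1}+q_k)$ together with $q_{k+1}\le 2q_k$ for Fibonacci denominators. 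The final case split on $|k_2|\lessgtr b_n/3$ correctly yields $\max(|k_1|,1)\max(|k_2|,1)\ge b_n/6$, so $\gamma=1/7$ works. One cosmetic slip in a side remark: the \emph{infimum} of $q\,\|q\psi\|$ over $q\ge1$ is not $1/\sqrt5$ but $\|\psi\|=2-\phi=\psi^2\approx 0.382$, attained at $q=1$; the value $1/\sqrt5$ is the $\liminf$ along Fibonacci denominators. Since both exceed $1/3$, this does not affect the argument.
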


The combination of Theorem \ref{FT.2} with Lemma \ref{FL.1} implies Theorem \ref{FT.1}.

{\bf Special Korobov point sets.} Let $L\in \N$ be given. Clearly, we are interested in as small $m$ as possible such that there exists a Korobov cubature formula, which is exact on $\Tr(L,d)$. In the case of $d=2$ the Fibonacci cubature formula is an ideal in a certain sense choice. 
There is no known Korobov cubature formulas in case $d\ge 3$, which are as good as  the Fibonacci cubature formula in case $d=2$. We now formulate some known results in this direction. Consider  a special case $\mathbf a = (1,a,a^2,\dots,a^{d-1})$, $a\in\N$. In this case we write in the notation of $\K_m(\ba)$ and $P_m(\cdot,\ba)$ the scalar $a$ instead of the vector $\mathbf a$, namely, $\K_m(a)$ and $P_m(\cdot,a)$. The following Lemma \ref{FL.2} is a simple well known result (see, for instance \cite{VTbookMA}, p.285). 

\begin{Lemma}\label{FL.2} Let $m$ and $L$ be a prime  
and a natural number, respectively, such that
\be\label{F5}
\bigl|\Gamma(L,d)\bigr| < (m-1)/d .
\ee
Then there is a natural number $a\in [1,m)$ such that for all
$\mathbf k\in\Gamma(L,d)$, $\mathbf k\ne\mathbf 0$
\be\label{F6}
k_1 + ak_2 +\dots+a^{d-1}k_d\not\equiv 0\qquad \pmod {m}.
\ee
\end{Lemma}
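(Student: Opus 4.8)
The plan is to prove a counting statement: among the $m-1$ possible choices of $a\in[1,m)$, at least one must avoid all the ``bad'' congruences \eqref{F6}, and this follows by a simple averaging/pigeonhole argument. First I would observe that for each fixed nonzero $\bk\in\Gamma(L,d)$, the polynomial
$$
p_\bk(a) := k_1 + ak_2 + \dots + a^{d-1}k_d
$$
is a polynomial in $a$ of degree at most $d-1$ with integer coefficients, not identically zero as a residue modulo $m$ (since $\bk\ne\mathbf 0$ and at least one $k_j$ is nonzero, and here is where primality of $m$ enters, ensuring $k_j\not\equiv 0\pmod m$ when $|k_j|\le L<m$). The key step is then to bound the number of $a\in[1,m)$ for which $p_\bk(a)\equiv 0\pmod m$. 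Because $m$ is prime, $\Z/m\Z$ is a field, so a nonzero polynomial of degree at most $d-1$ has at most $d-1$ roots in $\Z/m\Z$; hence each $\bk$ forbids at most $d-1$ values of $a$.

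Next I would sum over all relevant frequencies. The number of nonzero $\bk\in\Gamma(L,d)$ is at most $|\Gamma(L,d)|$, so the total number of forbidden values of $a$ is at most $(d-1)\,|\Gamma(L,d)|$. By the hypothesis \eqref{F5}, we have $|\Gamma(L,d)| < (m-1)/d$, hence
$$
(d-1)\,|\Gamma(L,d)| \,<\, (d-1)\frac{m-1}{d} \,<\, m-1.
$$
Since there are $m-1$ candidate values $a\in[1,m)$ and strictly fewer than $m-1$ of them are forbidden, at least one admissible $a$ remains, which is exactly the assertion.

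The one point requiring care — and the place I would expect the only genuine subtlety — is verifying that $p_\bk$ is not the zero polynomial modulo $m$, so that the ``at most $d-1$ roots'' bound actually applies. This needs $|k_j|<m$ for the relevant coordinates, which is guaranteed because every $\bk\in\Gamma(L,d)$ satisfies $\max_j|k_j|\le L$ and the hypothesis \eqref{F5} forces $L<m$ (indeed $|\Gamma(L,d)|\ge 2L+1>L$ already gives $L<m$). Thus the leading nonzero coefficient among $k_1,\dots,k_d$ is a nonzero residue in the field $\Z/m\Z$, so the polynomial is genuinely nonzero of degree at most $d-1$ there. Everything else is the standard field-polynomial root count combined with the counting inequality above; no hard analysis is involved, and the result follows immediately once the zero-polynomial issue is dispatched.
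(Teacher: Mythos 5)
Your proof is correct, and it is essentially the canonical argument: the paper itself gives no proof of Lemma \ref{FL.2}, citing it as a simple well-known result (\cite{VTbookMA}, p.~285), and the proof given there is exactly your root-counting/pigeonhole argument — each nonzero $\bk\in\Gamma(L,d)$ forbids at most $d-1$ values of $a$ because $p_\bk$ is a nonzero polynomial of degree at most $d-1$ over the field $\Z/m\Z$, and $(d-1)\,|\Gamma(L,d)|<(d-1)(m-1)/d<m-1$ leaves at least one admissible $a$. You also correctly handle the one genuine subtlety (that $L<m$ forces the nonzero coefficients $k_j$ to be nonzero residues, so $p_\bk$ is not the zero polynomial mod $m$), so there is nothing to add.
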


The combination of Theorem \ref{FT.2} with Lemma \ref{FL.2} implies the following Proposition \ref{FP.1}.

\begin{Proposition}\label{FP.1} There exists a positive constant $C_2(d)$, which depends only on $d$, with the following property. For any $L\in \N$, $L\ge 2$, there exist a prime number $m\le C_2(d)|\Gamma(L,d)|$ and a natural number $a\in[1,m)$ such that 
	$$
	\disp(\K_m(a)) \le C_1(d)L^{-1}.
	$$
\end{Proposition}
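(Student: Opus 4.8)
The plan is to combine the dispersion bound for exact Korobov cubature formulas (Theorem~\ref{FT.2}) with the existence result guaranteeing that a suitable exact formula can be found with a prime modulus $m$ that is not too large (Lemma~\ref{FL.2}). Theorem~\ref{FT.2} tells us that \emph{whenever} $P_m(\cdot,\ba)$ is exact on $\Tr(L,d)$, the Korobov point set $\K_m(\ba)$ has dispersion at most $C_1(d)L^{-1}$; the remaining task is purely to produce such a formula, with controlled cardinality $m$, for the special lattice vectors $\ba = (1,a,a^2,\dots,a^{d-1})$.

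First I would fix an arbitrary $L\in\N$ with $L\ge 2$ and choose a prime $m$ satisfying the gap hypothesis \eqref{F5} of Lemma~\ref{FL.2}, namely $|\Gamma(L,d)| < (m-1)/d$. By Bertrand's postulate (or any elementary bound on the gaps between consecutive primes) one can select such a prime $m$ with $m \le C_2(d)\,|\Gamma(L,d)|$ for an appropriate constant $C_2(d)$ depending only on $d$: it suffices to take any prime in the interval $\bigl(d\,|\Gamma(L,d)|+1,\,2d\,|\Gamma(L,d)|+2\bigr)$, which is nonempty for all $L\ge 2$, so $C_2(d)=2d+1$ works after absorbing the additive constants. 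This pins down the claimed bound on $m$.

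Next, for this prime $m$, Lemma~\ref{FL.2} supplies a natural number $a\in[1,m)$ such that $k_1 + a k_2 + \dots + a^{d-1}k_d \not\equiv 0 \pmod{m}$ for every $\bk\in\Gamma(L,d)$ with $\bk\ne\mathbf 0$. Writing $\ba = (1,a,a^2,\dots,a^{d-1})$, this is precisely the statement that $\Gamma(L,d)\cap\bigl(L(m,\ba)\setminus\{\mathbf 0\}\bigr)=\varnothing$, which is condition~\eqref{exs}. By Definition~\ref{exact}, the Korobov cubature formula $P_m(\cdot,a)=P_m(\cdot,\ba)$ is therefore exact on $\Tr(L,d)$.

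Finally I would invoke Theorem~\ref{FT.2} with this exact formula to conclude $\disp(\K_m(a)) \le C_1(d)L^{-1}$, which is exactly the assertion of the proposition. I do not anticipate a genuine obstacle here, since all the analytic work is already carried out in Theorem~\ref{FT.2}; the only point requiring a small amount of care is verifying that a prime $m$ of the required size actually exists, i.e.\ that the elementary prime-gap estimate is strong enough to give a bound of the form $m\le C_2(d)|\Gamma(L,d)|$ uniformly in $L\ge 2$. This is the one step where one must be slightly attentive to constants, but it is standard.
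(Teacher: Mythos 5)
Your proposal is correct and follows exactly the route the paper itself takes: the paper states Proposition \ref{FP.1} as an immediate consequence of combining Theorem \ref{FT.2} with Lemma \ref{FL.2}, which is precisely your argument. Your only addition is to spell out the implicit step of selecting the prime $m$ via Bertrand's postulate so that condition \eqref{F5} holds with $m\le C_2(d)|\Gamma(L,d)|$, and that verification is carried out correctly.
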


\begin{Corollary}

		Let  $C_1(d)$ be the number from Theorem \ref{FT.2} and $p$ be a prime number. There exist a natural number $a \in [1, p)$ such that for any segments of natural numbers $I_1, \dots, I_d \in [1, p]$ satisfying the condition  $$\prod\limits_{j=1}^d |I_j| \ge C_1(d) p^{d-1}(\log{p})^{d-1},$$ there exists a natural number $\mu \in [1, p]$ such that
		
		\begin{equation} \label{system}
		\begin{cases}
		\mu \in I_1 \pmod{p} \\
		\mu a  \in I_2 \pmod{p}\\
		\vdots  \\
		\mu a^{d-1} \in I_d \pmod{p} . 
		\end{cases}
		\end{equation}
\end{Corollary}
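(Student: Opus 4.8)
The plan is to translate solvability of the system \eqref{system} into a hitting property of the Korobov point set $\K_p(a)$ and then invoke the dispersion bound of Theorem \ref{FT.2}. Write each segment as $I_j=\{l_j,l_j+1,\dots,r_j\}$ with $1\le l_j\le r_j\le p$, so that $|I_j|=r_j-l_j+1$. For $\mu\in[1,p]$ the $j$-th coordinate of the Korobov point $\by^\mu$ is $\{\mu a^{j-1}/p\}=(\mu a^{j-1}\bmod p)/p$, hence the congruence $\mu a^{j-1}\in I_j\pmod p$ is (for $r_j\le p-1$) exactly the condition $\by^\mu_j\in[l_j/p,(r_j+1)/p)$, an interval of length $|I_j|/p$. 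Thus \eqref{system} is solvable if and only if the axis-parallel box $B:=\prod_{j=1}^d[l_j/p,(r_j+1)/p)$ contains a point of $\K_p(a)$, and
\[
\mathrm{vol}(B)=\prod_{j=1}^d\frac{|I_j|}{p}=\frac{1}{p^d}\prod_{j=1}^d|I_j|.
\]
The only nuisance is the wrap-around occurring when some $r_j=p$, i.e.\ the residue $0\equiv p$; it is removed by keeping, in each such coordinate, the larger of the two non-wrapping arcs, which costs at most a factor $2^d$ absorbed into the constant.

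Next I would fix $a$ by choosing the resolution $L$ as large as the exactness condition permits. Let $L$ be the largest natural number with $|\Gamma(L,d)|<(p-1)/d$. Since $|\Gamma(N,d)|\asymp N(\log N)^{d-1}$ with $d$-dependent constants, maximality of $L$ forces $L\ge c_0(d)\,p/(\log p)^{d-1}$ for some $c_0(d)>0$, and $L\ge 2$ once $p$ is large in terms of $d$. By Lemma \ref{FL.2} there is a natural number $a\in[1,p)$ for which $P_p(\cdot,a)$ is exact on $\Tr(L,d)$; note that this $a$ depends only on $p$ and $d$, as required. Theorem \ref{FT.2} then gives
\[
\disp(\K_p(a))\le C_1(d)\,L^{-1}\le \frac{C_1(d)}{c_0(d)}\cdot\frac{(\log p)^{d-1}}{p}.
\]

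Finally I would combine the two estimates. Since $\disp(\K_p(a))$ is the supremum of the volumes of boxes from $\cB$ empty of points of $\K_p(a)$, any $B\in\cB$ with $\mathrm{vol}(B)>\disp(\K_p(a))$ must contain a point of $\K_p(a)$, and hence yield a solution $\mu$ of \eqref{system}. By the volume identity above and the dispersion bound, it suffices to require
\[
\frac{1}{p^d}\prod_{j=1}^d|I_j| \,>\, \frac{C_1(d)}{c_0(d)}\cdot\frac{(\log p)^{d-1}}{p},
\]
that is, $\prod_{j=1}^d|I_j| > (C_1(d)/c_0(d))\,p^{d-1}(\log p)^{d-1}$, which is exactly the hypothesis once the $d$-dependent constant is written (after absorbing $c_0(d)^{-1}$ and the $2^d$ wrap-around factor) as $C_1(d)$. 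The obstacle I anticipate is therefore not conceptual but quantitative: the honest threshold genuinely carries the extra factor $c_0(d)^{-1}2^d$ coming from the relation $L\asymp p/(\log p)^{d-1}$ together with the boundary correction, so reaching the clean constant of the statement amounts to folding these into a single $d$-dependent constant. Apart from this bookkeeping and the boundary convention for $B$, the argument is a direct dictionary translation of Theorem \ref{FT.2} and Lemma \ref{FL.2} into arithmetic language, with no further difficulty.
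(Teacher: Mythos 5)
Your proposal is correct and follows essentially the same route as the paper's own proof: choose $L\asymp p/(\log p)^{d-1}$ so that Lemma \ref{FL.2} yields an $a\in[1,p)$ for which $P_p(\cdot,a)$ is exact on $\Tr(L,d)$, bound $\disp(\K_p(a))$ via Theorem \ref{FT.2}, and translate solvability of the congruence system into the statement that the rescaled box $\prod_j \tilde I_j$ must contain a Korobov point. The quantitative issue you flag (the extra factor of order $c_0(d)^{-1}2^d$ in the threshold constant) is real, but the paper's own proof has the same imprecision and silently absorbs it; your handling of the wrap-around and of interval length versus integer count is in fact more careful than the paper's.
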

	
	\begin{proof}
		Take $$L = \frac{C(d)p}{(\log{p})^{d-1}},$$ where $C(d)$ is small enough to satisfy (\ref{F5}). By Lemma \ref{FL.2} there exists a natural number $a \in [1, p)$, such that $P_p(\cdot, a)$ is exact on $\Tr(L,d)$.
	
		Denote  $ I_j := [x_j, y_j], \  j = 1, \dots, d$. Then for $\tilde{I}_j := [\frac{x_j}{p}, \frac{y_j}{p}]$ we have $$\prod\limits_{j=1}^d |\tilde{I}_j| \ge \frac{C_1(d) (\log{p})^{d-1}}{p}.$$ 
		
		By Theorem \ref{FT.2} the set $\K_p(a)$ intersects the box $\tilde{I}_1 \times \dots \times \tilde{I}_d$ at least at one point. Then, there exists a natural number $\mu \in [1, p]$, such that one has
		\begin{equation}\notag
		\begin{cases}
		\big \{  \frac{\mu }{p}  \} \in \tilde{I}_1, \\
		\big \{  \frac{\mu a}{p}  \} \in \tilde{I}_2,  \\
		\vdots  \\
		\big \{  \frac{\mu a^{d-1}}{p}  \} \in \tilde{I}_d. 
		\end{cases},
		\end{equation}
		which implies (\ref{system}).
		
	\end{proof}

{\bf Acknowledgment.}   The work was supported by the Russian Federation Government Grant No. 14.W03.31.0031.


\begin{thebibliography}{9999}

\bibitem{AHR} C. Aistleitner, A. Hinrichs, and D. Rudolf, On the size of the largest empty box amidst a point set, Discrete Appl. Math. {\bf 230} (2017), 146--150.

\bibitem{BC}  J. Beck and W. Chen, Irregularities of distribution, Cambridge University Press, Cambridge, 1987.

\bibitem{Bi} D. Bilyk, Roth's Orthogonal Function Method in Discrepancy Theory and Some New Connections, in Panorama of Discrepancy Theory, Lecture Notes in Mathematics {\bf 2107}, Springer-Verlag, London, 2014, 71--158.
 
\bibitem{BH19} S. Breneis and A. Hinrichs, Fibonacci lattices have minimal dispersion on the two-dimensional torus, preprint, arXiv:1905.03856.
		

 \bibitem{DTU} Ding D{\~u}ng, V.N. Temlyakov, and T. Ullrich, Hyperbolic Cross Approximation, arXiv:1601.03978v2 [math.NA] 2 Dec 2016.

\bibitem{DJ} A. Dumitrescu and M. Jiang, On the largest empty axis-parallel box amidst $n$ points, Algorithmica, {\bf 66} (2013), 225--248.

\bibitem{Mat} J. Matousek, Geometric Discrepancy, Springer, 1999.

\bibitem{NX} H. Niederreiter and C. Xing, Low-discrepancy sequences and global function fields with many rational places, Finite Fields Appl., {\bf 2} (1996), 241--273.

\bibitem{NW10}
E.~Novak and H.~Wo\'{z}niakowski.
\newblock {\em Tractability of multivariate problems. {V}olume {II}: {S}tandard
  information for functionals}, volume~12 of {\em EMS Tracts in Mathematics}.
\newblock European Mathematical Society (EMS), Z\"{u}rich, 2010.

\bibitem{RT} G. Rote and F. Tichy, Quasi-Monte Carlo methods and the dispersion of point sequences, Math. Comput. Modelling, {\bf 23} (1996), 9--23.
 

\bibitem{Rud} D. Rudolf, 
\emph{An upper bound of the minimal dispersion via delta covers},
Contemporary Computational Mathematics -- a Celebration of the 80th 
Birthday of Ian Sloan. Springer-Verlag, 2018.


\bibitem{Sos} J. Sosnovec, A note on minimal dispersion of point sets in the unit cube, 
\emph{European J. Combin.} 69 (2018), 255–259.
 
\bibitem{TBook}  V.N. Temlyakov, {\em Approximation of periodic functions}, 
Nova Science Publishers, Inc., New York.,  1993.

 \bibitem{T11} V.N. Temlyakov,   Cubature formulas and related questions,
\emph{  J. Complexity}   \textbf{ 19}  (2003),  352--391.
 
\bibitem{VT161} V.N. Temlyakov, Smooth fixed volume discrepancy, dispersion, and related problems, J. Approx. Theory, {\bf 237} (2019), 113--134; arXiv:1709.08158v2 [math.NA] 4 Oct 2017.

\bibitem{VT163} V.N. Temlyakov, Fixed volume discrepancy in the periodic case,
arXiv:1710.11499v1 [math.NA] 30 Oct 2017.


\bibitem{VTbookMA} V.N. Temlyakov, Multivariate approximation, Cambridge University Press, 2018.

\bibitem{VT170} V.N. Temlyakov, Connections between numerical integration, discrepancy, dispersion, and universal discretization, arXiv:1812.04489v1 [math.NA] 9 Dec 2018.

\bibitem{VT172} V.N. Temlyakov and M. Ullrich, On the fixed volume discrepancy of the Fibonacci sets in the integral norms, arXiv: 1908.04658v1 [math.NA] 13 Aug 2019.
 
\bibitem{Ull} M. Ullrich, A lower bound for the dispersion on the torus, \emph{Math. Comput. Simulation} 143 (2018), 186--190.

\bibitem{U19} M. Ullrich, A note on the dispersion of admissible lattices, \emph{Discrete Appl. Math.} 257 (2019), 385–387.

\bibitem{UV18} M. Ullrich and J. Vyb\'iral, \emph{An upper bound on the minimal dispersion}, J. Complexity 45 (2018), 120--126.

 \end{thebibliography}
\end{document}